\documentclass[11pt]{amsart}

\usepackage{macros}
\usepackage{mathtools}
\usepackage{tikz-cd}

\def\co{\colon\thinspace}

\def\calgd{\mathsf{C}^*}
\def\fgr{\fg_{\text{red}}}

\def\xto{\xrightarrow}


\linespread{1.1}

\title{The $\hat{A}$-genus as a projective volume form on the derived loop space}

\author{Ryan Grady}
\address{Department of Mathematical Sciences\\Montana State University\\Bozeman, MT 59717}
\email{ryan.grady1@montana.edu}
\thanks{The author was partially supported by the National Science Foundation under Award DMS-1309118.}

\subjclass{}

\begin{document}

\begin{abstract}
In the present work, we extend our previous work with Gwilliam by realizing $\hat{A}(X)$ as the projective volume form associated to the BV operator in our quantization of a one-dimensional sigma model. We also discuss the associated integration/expectation map.  We work in the formalism of $L_\infty$ spaces, objects of which are computationally convenient presentations for derived stacks.  Both smooth and complex geometry embed into $L_\infty$ spaces and we specialize our results in both of these cases.

\end{abstract}

\maketitle

\setcounter{tocdepth}{1}
\tableofcontents

\section{Introduction} 

In this paper we complete our description of topological quantum mechanics in the perturbative Batalin--Vilkovisky (BV) formalism. We use the mathematical approach to quantum field theory (QFT) developed by Costello \cite{Cos1} and Costello--Gwilliam \cite{CosGw, CosGw2}. Our theory is formulated as a a one-dimensional sigma model with target the cotangent bundle, $T^\ast X$, of a smooth manifold, $X$.\footnote{In \cite{GGCS} and in the sequel, we call this theory one-dimensional Chern--Simons theory due to its similarity to perturbative Chern--Simons theory in three dimensions, see Section \ref{sect:1d}.  However, due to the nature of the theory, particularly the action functional, one could/should call it a (deformed) BF theory.  See \cite{CMR} for a concrete description of these various theories.} In previous work (with Gwilliam) \cite{GGCS} we defined this theory, proved that it admitted a one-loop quantization and identified the partition function as the $\hat{A}$-genus, $\hat{A} (X)$. With Gwilliam and Williams \cite{GGW}, we proved that the (factorization) algebra of quantum observables is the Rees algebra of the filtered algebra of differential operators on $X$. Since our quantization of one-dimensional Chern--Simons is a {\it cotangent quantization}, one would expect to obtain a projective volume form on the space of fields and an integration theory.\footnote{The relationship between cotangent quantizations and projective volume forms is recalled in Appendix \ref{sect:pvol}.} Here, we prove that this volume form can be identified with $\hat{A} (X)$ and describe the associated integration/expectation map.

The pioneering work of Costello, \cite{CosWG}, in which he recovered the Witten genus via a two-dimensional sigma model, has inspired much of the present work and its precedents. In our work, past and present, we adapt the outline provided in {\it ibid.} to one-dimensional Chern--Simons theory. In the process, fleshing out detail, correcting oversights, and modifying Costello's formalism to work in the smooth (not necessarily holomorphic) setting.

 As discussed below, we work in the formalism of $L_\infty$ spaces \cite{GGLoop}, objects of which are computationally convenient presentations for derived stacks.  Both smooth and complex geometry embed into $L_\infty$ spaces and we specialize our results in both of these cases. Moreover, various flavors of (derived) loop spaces are presented by $L_\infty$ spaces. The loop space $\widehat{\cL_{dR} X}$, see Section \ref{sect:dloop},   plays a central role in the formulation of our field theory.  Moreover, we can state the expectation map as a linear map on functions on $\widehat{\cL_{dR} X}$.  In the case that $X$ is a complex manifold, functions on $\widehat{\cL_{dR} X}$ can be identified with holomorphic differential forms. 
 
In the case of a complex manifold, our main theorem can be stated as follows.

\begin{theorem}
The quantization of one dimensional Chern--Simons theory with target a complex manifold $X$ determines an integrable projective volume form $\overline{dVol}_{S^1}$ on $\widehat{\cL_{dR} X}$ with associated integral
\[
\int_{\widehat{\cL_{dR} X}} \alpha \; \overline{dVol}_{S^1} = k \int_{X} \alpha \cdot \hat{A} (X) ,
\]
for $k$ a nonzero constant and where $\int_X$ indicates the usual integration of holomorphic forms.
\end{theorem}

We have an analogous theorem for an arbitrary $L_\infty$ space $B \fg$.  When $B \fg$ describes a smooth manifold, the na\"{i}ve integral is zero. However, there is an interesting $S^1$-equivariant integration theory, which we describe in Section \ref{sect:smooth}.

Our main theorem is not unexpected from a physical perspective, see \cite{AG, FW, GetzlerIndex, WittenIAS}.  There have also been results of a similar vein in the mathematics community, for instance \cite{Grivaux, Ramadoss, Markarian} or \cite{KSDefQuant} (Chapter 5). The novelty in our result is its derivation via a mathematically rigorous quantum field theory and its interpretation in terms of (smooth) derived stacks.

We note that there is a related one-dimensional theory with target a symplectic manifold $M$. With Si Li and Qin Li \cite{GLL}, we quantized this theory, in the process recovering Fedosov quantization \cite{Fed} and giving an explicit presentation of the Algebraic Index Theorem \cite{NT}. Interestingly, as the general symplectic target is not (globally) a cotangent bundle, there is no one-loop quantization; in fact, there are quantum corrections to all orders.

\subsection{Perturbative BV Theory and Integration}

Let us briefly recall some aspects of integration in quantum field theory (QFT).

In field theory, one studies a space of fields/configurations, $\sE$, and an action functional on the space $S \colon \sE \to \RR$.  Classical field theory/physics focuses on the critical points of $S$, i.e., the solutions to the {\it Euler-Lagrange} equations. (Euclidean) quantum field theory aims to understand the (typically non-existent) measure $e^{-S(\phi)/\hbar} \sD \phi$ on the space of fields $\sE$.
More accurately, one wishes to compute the expected value of functions $\cO$ on the space of fields:
\[
\langle \cO \rangle :=  \int_{\sE} \cO( \phi) e^{-S(\phi)/\hbar} \sD \phi.
\]

In perturbative QFT, we treat the parameter $\hbar$ as formal, not a physical quantity. The idea is that for very small values of $\hbar$, the integrals above receive their dominant contribution from a small neighborhood around the critical points of $S$. In nice cases, the space of critical points is a finite-dimensional manifold (or not too far from one) that we'll call the {\em critical manifold}. Thus, we might hope to approximate the ``true integral" by integrating over a tubular neighborhood of the critical manifold, which we view as an infinite-dimensional normal bundle on the critical manifold.
Ideally, we could use some sort of Fubini Theorem. First, one integrates out the normal bundle (i.e., fix a critical point and integrate over the fiber, which is an infinite-dimensional vector space). Second, integrate the induced measure on the critical manifold. The first stage is an analogue of stationary phase approximation. The second stage is just a usual integral.

 Hence, our task is to do the following.
\begin{enumerate}
\item Define what we mean by a ``perturbative quantum field theory" (i.e., computing around a fixed critical point);
\item Define families of perturbative QFTs over manifolds (i.e., how to integrate out the ``normal direction"); and
\item Explain how to obtain a volume form on the parameterizing ``critical manifold."
\end{enumerate}

In the monograph \cite{Cos1}, Costello develops a mathematical formulation of perturbative QFT which fuses effective field theory and BV theory, thereby addressing task (1). This work was subsequently expanded in \cite{CosGw2} (especially Chapters 5 \& 8); we provide a rapid overview in Appendix \ref{app:BV1}. 

There are many approaches to task (2). In the present work, we will use the language of $L_\infty$ spaces which were introduced in \cite{CosWG}, and subsequently developed by the author and Gwilliam in \cite{GGLoop} and \cite{GGAlgd}. 

Costello also addresses task (3) in \cite{CosWG}, with further details appearing in \cite{CosGw2} (Section 10.4). Necessarily, we make some technical adaptations to this formalism in Sections \ref{sect:nice} and \ref{sect:vol}  in order to apply it to our one-dimensional theory.

\subsection{A Reader's Guide}

For someone familiar with Costello's formalism and $L_\infty$ spaces, it is sufficient to read Sections \ref{sect:1d},  \ref{sect:Avol}, and  \ref{sect:int}, revisiting some technical results in Sections \ref{sect:nice} and \ref{sect:vol} which differ from those in \cite{CosWG}.  

A reader unfamiliar with the work of Costello and Gwilliam may prefer to start with Appendix \ref{app:BV1}. The technical machinery for 
defining our one-dimensional theory is recalled in Section \ref{sect:Loo}, and the remainder of the paper can be read linearly.

Appendix \ref{app:genera} recalls the construction of the $\hat{A}$-genus and its appearance in math and physics.

\subsection{Acknowledgements} The present work grew out of a chapter of the author's PhD dissertation at the University of Notre Dame.  As such, the author is thankful to the university, the math department, and particularly Stephan Stolz and Sam Evens for continued support and discussion. Further, many thanks are due to Kevin Costello and Owen Gwilliam for constant inspiration and collaboration.  The anonymous referees greatly enhanced the presentation and readability of the paper. Lastly, thanks to a great mathematical QFT community including Damien Calaque, Si Li, Qin Li, Brian Williams, and Dan Berwick-Evans.

\section{$L_\infty$ Preliminaries and Notation}\label{sect:Loo}

\subsection{Conventions}

We work throughout in characteristic zero.
We work cohomologically, so the differential in any complex increases degree by one.

For $A$ a cochain complex, $A^\sharp$ denotes the underlying graded vector space. If $A$ is a cochain complex whose degree $k$ space is $A^k$, then $A[1]$ is the cochain complex where $A[1]^k = A^{k+1}$. We use $A^\vee$ to denote the graded dual.

For $V$ a graded $R$-module, its {\em completed symmetric algebra} is the graded $R$-module
\[
\csym_{R} (V) = \prod_{n \geq 0} \Sym^n_{R}(V)
\]
equipped with the filtration $F^k \csym_{R} (V)  = \Sym^{\geq k}_{R}(V)$ and the usual commutative product, which is filtration-preserving. 

\subsection{$L_\infty$ Algebras and Spaces}

Here we provide an overview of our (concrete) setting for derived geometry from \cite{GGLoop}. 

\begin{definition}
Let $A$ be a commutative dg algebra with a nilpotent dg ideal $I$. A {\em curved $\L8$ algebra over $A$} consists of
\begin{enumerate}
\item[(1)] a locally free, $\ZZ$-graded $A^\sharp$-module $V$, and
\item[(2)] a linear map of cohomological degree 1
\[
d: \Sym (V[1]) \to  \Sym (V[1]),
\]
\end{enumerate}
where $\Sym (V[1])$ indicates the graded vector space given by the symmetric algebra over the graded algebra $A^\sharp$ underlying the dg algebra $A$. Further, we require 
\begin{enumerate}
\item[(i)] $d^2 = 0$,
\item[(ii)] $(\Sym (V[1]),d)$ is a cocommutative dg coalgebra over $A$ (i.e., $d$ is a coderivation), and
\item[(iii)] modulo $I$, the coderivation $d$ vanishes on the constants (i.e., on $\Sym^0$).
\end{enumerate}
\end{definition}

We use $C_\ast(V)$ to denote the cocommutative dg coalgebra $(\Sym (V[1]),d)$;  we call it the \emph{Chevalley--Eilenberg homology complex} of $V$, as it extends the usual notion of Lie algebra homology. There is also a natural Chevalley--Eilenberg \emph{cohomology} complex $C^\ast(V)$. It is $(\csym (V^\vee[-1]),d)$, where the notation $\csym (V^\vee[-1])$ indicates the completed symmetric algebra over the graded algebra $A^\sharp$ underlying the dg algebra $A$.  The differential $d$ is the ``dual" differential to that on $C_*(V)$. In particular, it makes $C^*(V)$ into a commutative dg algebra, so $d$ is a derivation.

We now describe a version of ``families of curved $\L8$ algebras parametrized by a smooth manifold."

\begin{definition}
Let $X$ be a smooth manifold. An {\it $\L8$ space} is a pair $(X, \fg)$, where $\fg$ is the sheaf of smooth sections of a $\ZZ$-graded vector bundle $\pi: V \to X$ equipped with the structure of a curved $\L8$ algebra structure over the commutative dg algebra $\Omega^\ast_X$ with nilpotent ideal $\sI = \Omega^{\geq 1}_X$.
\end{definition}

For brevity, we sometimes write $B \fg$ for the $L_\infty$ space $(X, \fg)$ (see the notation below).  By definition, functions on $B \fg$ are given by
\[
\sO (B \fg) \overset{\text{def}}{=} C^\ast (\fg).
\]

An $\L8$ space $B \fg = (X, \fg)$ has an associated ``functor of points'' and hence can be understood as presenting a kind of space $\bB \fg$ in the same way that a commutative algebra presents a scheme. More precisely, to $(X, \fg)$ we associate a simplicial set valued functor
\[
\bB \fg: \dgMan^{op} \to s\!\Sets,
\]
where $\dgMan$ is the site (in fact, the $\infty$-site) of {\it nil dg manifolds}, in which an object $\cM$ is a smooth manifold $M$ equipped with a sheaf $\sO_\cM$ of commutative dg algebras over $\Omega^*_M$ that has a nil dg ideal $\sI_\cM$ such that $\sO_\cM/\sI_\cM \cong \cinf_M$.  For the full definition, including the definition of cover, see \cite{GGLoop}. Moreover, the functor $\bB \fg$ preserves weak equivalences and satisfies descent as embodied in the following result.

\begin{theorem}[Theorem 4.8 \cite{GGLoop}]\label{thm:L8IsDerived}
The functor $\bB \fg$ associated to an $\L8$ space $(X,\fg)$ is a derived stack.
\end{theorem}

Moreover, the association $(X,\fg) \mapsto B \fg$ is part of a functor of categories with weak equivalences from $L_\infty$ spaces to derived stacks \cite{GGAlgd}. Further, this functor also detects weak equivalences.


\begin{example}\label{ex:Loo}
We have several geometric examples of $L_\infty$ spaces.
\begin{enumerate}
\item Consider the $L_\infty$ space $(X, 0)$, for $X$ a smooth manifold. This $L_\infty$ space presents a version of the {\it de Rham stack} $X_{dR}$.  For any dg manifold $(M, \sO_\cM)$, we have $X_{dR} (M, \sO_\cM) = X_{dR} (M, \cinf_M)$, which is the constant simplicial set of smooth maps $M \to X$.
\item Let $X$ be a smooth manifold. There is an $L_\infty$ space $(X, \fg_X)$ such that
\begin{enumerate}
\item $\fg_X \cong \Omega^\sharp_X(T_X[-1])$ as  $\Omega^\sharp_X$ modules, and
\item $C^*(\fg_X) \cong dR(\sJ) $ as commutative $\Omega_X$ algebras, where $\sJ$ is the infinite jet bundle.
\end{enumerate}
As we explain in \cite{GGLoop}, $\bB \fg_X$ is a natural {\it derived enhancement} of the smooth manifold $X$. 
\item Let $Y$ be a complex manifold, then there exists an $\L8$ space $(Y, \fg_{Y_{\overline{\partial}}})$, such that
\begin{enumerate}
\item As an $\Omega_Y^\sharp$-module, $\fg_{Y_{\overline{\partial}}}$ is isomorphic to $\Omega^\sharp_Y(T^{1,0}_Y [-1])$;
\item The derived stack $\bB \fg_{Y_{\overline{\partial}}}$ represents the moduli problem of holomorphic maps into $Y$, i.e., for any complex manifold $Z$ (viewed as a nilpotent dg manifold), $\bB \fg_{Y_{\overline{\partial}}}(Z)$ is the discrete simplicial set of holomorphic maps from $Z$ to $Y$.
\end{enumerate}
\item Generalizing the previous constructions, in \cite{GGAlgd} we associate an $L_\infty$ space to any Lie algebroid.  That is, let $\rho \co L \to T_X$ be a Lie algebroid over a smooth manifold $X$, then there exists an $\L8$ space $(X, \fg_L)$
such that
 \begin{enumerate}
 \item $\fg_L \cong \Omega^\sharp_X (T_X[-1] \oplus L)$ as $\Omega^\sharp_X$ modules, and
 \item $C^* ( \fg_L) \cong dR(J(\calgd(L)))$ as commutative $\Omega^\ast_X$ algebras.
 \end{enumerate}
\end{enumerate}
\end{example}

\subsection{Vector Bundles on $L_\infty$ Spaces}

Many natural constructions in geometry work in the setting of $L_\infty$ spaces, as we briefly recall.

\begin{definition}\label{defn:vb}
Let $(X, \fg)$ be an $\L8$ space.  A {\it vector bundle} on $(X,\fg)$ is a $\ZZ$-graded vector bundle $\pi:V \to X$ where the sheaf of smooth sections $\cV$ over $X$ is equipped with the structure of an $\Omega^\sharp_X$-module and where the direct sum of sheaves $\fg \oplus \cV$ is equipped with the structure of a curved $\L8$ algebra over $\Omega^*_X$, which we denote $\fg \ltimes \cV$, such that
\begin{enumerate}
\item[(1)] the maps of sheaves given by inclusion $\fg \hookrightarrow  \fg \ltimes \cV$ and by the projection $\fg \ltimes \cV \to \fg$ are maps of $\L8$ algebras, and
\item[(2)] the Taylor coefficients $\ell_n$ of the $\L8$ structure vanish on tensors containing two or more sections of $\cV$.
\end{enumerate}
The {\it sheaf of sections of $\cV$ over $(X,\fg)$} denotes $C^* (\fg, \cV[1])$, the sheaf on $X$ of dg $C^*(\fg)$-modules given by the Chevalley--Eilenberg complex of $\cV$ as a $\fg$-module. The {\it total space} for the vector bundle $\cV$ over $(X,\fg)$ is the $\L8$ space $(X, \fg \ltimes \cV)$.
\end{definition}

For example, the tangent bundle to $(X, \fg)$ is given by $\fg[1]$ equipped with the adjoint action of $\fg$. Dually, the cotangent bundle is given by $\fg^\vee [-1]$ equipped with the coadjoint action.  It follows that the $k$-forms on $(X, \fg)$ are given by 
\[
\Omega^k_{(X,\fg)} = C^* (\fg , (\Lambda^k \fg) [-k]),
\]
as discussed in \cite{GGLoop}.

Recall that a symplectic form is a 2-form that is nondegenerate and closed. This definition works perfectly well in the derived setting, so long as one recognizes that being closed --- i.e., being annihilated by the differential of the de Rham complex --- is data and not a property.

Let $\Omega^{2,cl}_{(X,\fg)}$, the complex of {\em closed 2-forms} on the $\L8$ space, be the totalization of the double complex
\[
\Omega^2_{(X,\fg)} \xto{d_{dR}} \Omega^3_{(X,\fg)} \xto{d_{dR}} \Omega^4_{(X,\fg)} \xto{d_{dR}} \cdots.
\]
A {\em closed 2-form} is a cocycle in this complex. Every element $\omega$ of $\Omega^{2,cl}_{(X,\fg)}$ has an underlying 2-form $i(\omega)$ by taking its image under the truncation map $i \co \Omega^{2,cl}_{(X,\fg)} \to \Omega^2_{(X,\fg)}$.

\begin{definition}
An {\em $n$-shifted symplectic form} on an $\L8$ space $(X,\fg)$ is a closed 2-form $\omega$ of cohomological degree $n$ such that the induced map $i(\omega) \co T_{(X,\fg)} \to T^*_{(X,\fg)}[-n]$ is a quasi-isomorphism.
\end{definition}

\subsection{The Derived Loop Space} \label{sect:dloop}

In derived geometry, there are several flavors of circle, hence there are several loop spaces. We present a few of these loop spaces as $L_\infty$ spaces. For further discussion, see \cite{GGLoop}, \cite{BZN}, or \cite{TV11}.

Recall from Example \ref{ex:Loo} above, that a smooth manifold $X$ can be enhanced to a derived stack $\bB \fg_X$. We therefore can define a derived enhancement of the smooth loop space as follows
\[
\begin{array}{cccc}
\cL_{sm}X: & \dgMan^{op} & \to & s\!\Sets \\[1ex]
& \cM & \mapsto &  \bB\fg_X(S^1 \times \cM)
\end{array}.
\]
This space is a derived stack, but it doesn't have a presentation in terms of $L_\infty$ spaces and as such doesn't manifestly define a perturbative gauge theory in our formalism (see Section \ref{sect:BVloop}).

Next, consider the $L_\infty$ space $(X, \RR[\epsilon] \otimes \fg_X)$ for $\epsilon$ a square zero parameter of degree 1.  We call this space the {\it Betti loop space} of $X$ and denote it $\cL_{\cB} X$. We have an isomorphism of $L_\infty$ spaces $\cL_{\cB} X \cong T[-1] \bB \fg_X$. The Betti loop space should be thought of as the mapping object obtained by replacing $S^1$ by its cohomology ring.

The final version of the circle we consider is $S^1_{dR} = (S^1 , \Omega^\ast (S^1))$.
This flavor of $S^1$ gives us the {\it de Rham loop space}  of $X$:
\[
\begin{array}{cccc}
\cL_{dR}X: & \dgMan^{op} & \to & s\!\Sets \\[1ex]
& \cM & \mapsto &  \bB\fg_X(S^1_{dR} \times \cM)
\end{array}.
\]
The de Rham loop space again doesn't have a presentation in terms of an $L_\infty$ space, however a certain substack does.  Consider the substack $\widehat{\cL_{dR} X}$, presented by the $L_\infty$ space $(X, \Omega^\ast (S^1) \otimes \fg_X)$.

\begin{prop}[\cite{GGLoop} Lemma 6.6/6.7]
Let $X$ be a smooth manifold and consider the $L_\infty$ space $\widehat{\cL_{dR} X} = (X, \Omega^\ast(S^1) \otimes \fg_X)$.
\begin{enumerate}
\item The derived stack presented by $\widehat{\cL_{dR} X}$ is the substack of $\cL_{dR}X$ where for any dg manifold $(M, \sO_M)$ the underlying map of smooth manifolds $S^1 \times M \to X$ is constant along $S^1$.
\item Any volume form $\omega$ on $S^1$ determines a weak equivalence of derived stacks
\[
\underline{\omega} :  \cL_{\cB} X \Rightarrow \widehat{\cL_{dR} X}.
\]
\end{enumerate}
\end{prop}

Finally, if $X$ is a symplectic manifold, then $\widehat{\cL_{dR}X}$ is a -1-symplectic derived stack. Explicitly,  fix a symplectic form $\omega \in \Omega^2(X)$ and a 1-form $\nu \in \Omega^1(S^1)$ that is not exact, then consider the pairing
\[
\begin{array}{cccc}
\Omega_{\omega,\nu}:&[\fg_X \ot \Omega^*(S^1)]^{\ot 2} &\to &\Omega^*(X)\\
&(Z \ot \alpha) \ot (Z' \ot \alpha') &\mapsto &{\displaystyle \int_{\theta \in S^1} }J(\omega)(Z \ot \alpha(\theta), Z' \ot \alpha'(\theta)) \wedge \nu(\theta)
\end{array}.
\]

\begin{prop}[\cite{GGLoop} Lemma 6.8]
The 2-form $\Omega_{\omega, \nu}$ is a -1-symplectic form on $\widehat{\cL_{dR}X}$.
\end{prop}

\subsection{Nice $L_\infty$ Spaces}\label{sect:nice}

Following \cite{CosWG}, we describe some tameness properties of $\L8$ spaces.  These properties will be used in our discussion of integrable volume forms in Section
\ref{sect:vol}.

\begin{definition}
Given an $\L8$ space $(X,\fg)$, the reduced structure sheaf $\fg_{\mathrm{red}}$ is defined by
\[
\fgr = \fg/\Omega^{>0}_X .
\]
\end{definition}

\begin{prop}
Given an $\L8$ space $(X, \fg)$, the reduced structure sheaf $\fgr$ has no curving i.e. $l_1^2 =0$.
\end{prop}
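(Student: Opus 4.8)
The plan is to trace the two defining conditions on the derivation $d$ through the reduction modulo $I = \Omega^{>0}_X$, and then read off $l_1^2$ from the lowest-weight piece of $d^2 = 0$. Write $A = \Omega^\ast_X$ and $V = \fg$, so that $C^\ast(\fg) = \csym(V^\vee[-1])$ carries the square-zero degree-one derivation $d$. Since $A/I = \cinf_X$, the reduced object is $\fgr = \fg \otimes_A \cinf_X$ with Chevalley--Eilenberg complex $C^\ast(\fgr) = C^\ast(\fg) \otimes_A \cinf_X$, equipped with the reduced derivation $\bar d$, the image of $d$ modulo $I$. As the reduction map is a morphism of graded algebras and $d^2 = 0$, the operator $\bar d$ is again a square-zero degree-one derivation, so $\fgr$ is a priori a curved $\L8$ algebra over $\cinf_X$ and the only thing to check is that its curving vanishes.

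The key observation is the dictionary between curving and the behaviour of $d$ on the augmentation ideal. Grade $C^\ast(\fg)$ by symmetric weight; writing $d = \sum_{n \ge 0} d_n$ with $d_n \colon V^\vee[-1] \to \mathrm{Sym}^n(V^\vee[-1])$, the extension of $d_n$ as a derivation raises symmetric weight by $n-1$. The curving $l_0$ is by definition the dual of the weight-lowering piece $d_0 \colon V^\vee[-1] \to \mathrm{Sym}^0(V^\vee[-1]) = A^\sharp$, and $d_0$ is the unique component able to carry a generator out of the augmentation ideal $\csym^{\ge 1}(V^\vee[-1])$ generated by $V^\vee[-1]$. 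Since $d$ is a derivation, it preserves this ideal if and only if it sends generators into it, which happens if and only if $d_0 = 0$, i.e. if and only if $l_0 = 0$.

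Condition (2) in the definition of a curved $\L8$ algebra over $A$ states precisely that $d$ modulo $I$ preserves the ideal generated by the generators, so by the dictionary just recorded this forces $\bar d_0 = 0$: the curving of $\fgr$ vanishes. It remains to extract $l_1^2 = 0$. Decompose $0 = \bar d^2$ by symmetric weight; since $\bar d_0 = 0$ and $\bar d_1$ is the unique weight-preserving summand, the weight-zero component of $\bar d^2$ is exactly $\bar d_1 \circ \bar d_1$, whence $\bar d_1^2 = 0$. Restricting to generators and dualizing turns $\bar d_1^2 = 0$ into $l_1^2 = 0$ on $\fgr$; equivalently, this is the one-input higher Jacobi relation $l_1(l_1(x)) + l_2(l_0, x) = 0$ with its curvature term deleted. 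I expect the only real friction to lie in this bookkeeping --- matching ``$d$ preserves $\csym^{\ge 1}(V^\vee[-1])$'' with the vanishing of $l_0$, and tracking the signs incurred when passing from $\bar d_1^2 = 0$ to $l_1^2 = 0$ through the pairing that produces $l_n \colon \Lambda^n V[n-2] \to V$ from $d_n$; everything else is a formal consequence of $d^2 = 0$ and the exactness of reduction modulo the nilpotent ideal.
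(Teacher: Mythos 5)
Your proof is correct and follows essentially the same route as the paper: the crux in both is that condition (2) of the definition forces the curving $l_0$ (equivalently $d_0$) to vanish modulo $I$, after which $l_1^2=0$ follows from the curved $\L8$ relations. The only cosmetic difference is that the paper cites the relation tying $l_1^2$ to $l_0$ directly, whereas you rederive it as the weight-zero component of $\bar d^2=0$; these are the same fact.
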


\begin{proof}
From the $\L8$ relations we know that $l_1^2 = l_0$.  Now $l_0 : \CC \to V$ i.e. $l_0$ is just an element of $V$ which is dual to the map $d_0 : V^\vee [-1] \to \CC$.  The condition that  reduced modulo the nilpotent ideal $I$ the derivation $d$ preserves the ideal generated by $V$ implies that $l_0 \in V \otimes_A I$.  Therefore reduced modulo $I$, $l_0 =0$.
\end{proof}

\begin{definition}
\mbox{}
\begin{enumerate}
\item An $\L8$ space $(X, \fg)$ is {\it locally trivial} if the $C^\infty_X$-linear sheaf of $\L8$-algebras $\fgr$ is locally quasi-isomorphic to the sheaf of sections of a graded vector bundle $V$, with trivial differential and $\L8$ structure.
\item An $\L8$ space $(X, \fg)$ is {\it quasi-smooth} if the cohomology sheaves of $\fgr$ are concentrated in degrees 1 and 2.
\item An $\L8$ space $(X, \fg)$ is {\it nice} if it is both locally trivial and quasi-smooth.
\end{enumerate}
\end{definition}

Considered up to equivalence, the local structure of $\fgr$ for a nice $\L8$ space $(X, \fg)$ is easy to describe.  Indeed by assumption $(X, \fg)$ is nice and hence locally trivial, so we can assume that locally $\fgr$ has trivial differential and $\L8$ structure, i.e.,  locally $\fgr$ is a free $C^\infty_X$-module.  Let $d_i$ denote the rank of $H^i (\fgr )$. As $(X, \fg)$ is additionally quasi-smooth $d_i = 0$ for $i \neq 1,2$.  Let $V$ be the graded vector space given by
\[
V = \CC^{d_1} \oplus \CC^{d_2} [1] .
\]
Locally we have an isomorphism 
\[
\fgr \cong V^\vee [-1] \otimes_\CC C^\infty_X .
\]
The Chevalley--Eilenberg complex also has a description in terms of $V$:
\[
C^\ast (\fgr) \cong \csym_{C^\infty_X} ( (V^\vee [-1] \otimes_\CC C^\infty_X)^\vee [-1]) \cong C^\infty_X \otimes_\CC \csym_\CC (V) .
\]


\begin{lemma}
If $(X, \fg)$ is a nice $\L8$ space, then $T^\ast [-1] (X, \fg) = (X , \fg \oplus \fg^\vee [-3] )$ is also nice.
\end{lemma}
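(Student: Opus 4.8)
The plan is to verify the two defining conditions of niceness—quasi-smoothness and local triviality—for the reduced structure sheaf of $T^\ast[-1](X,\fg)$. Since both conditions concern only the reduced algebra, and since dualization and shifting commute with reduction modulo $\Omega^{>0}_X$ for finite locally free modules (so that $(\fg \oplus \fg^\vee[-3])_{\mathrm{red}} \cong \fgr \oplus \fgr^\vee[-3]$), the entire argument takes place at the level of the $C^\infty_X$-linear coadjoint semidirect product $\fgr \oplus \fgr^\vee[-3]$.

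First I would dispatch quasi-smoothness by a direct cohomology computation. Since $(X,\fg)$ is quasi-smooth, $H^i(\fgr)$ is supported in degrees $i = 1, 2$, with ranks $d_1, d_2$. Dualization flips the grading, $H^i(\fgr^\vee) \cong (H^{-i}(\fgr))^\vee$, and the shift gives $H^i(\fgr^\vee[-3]) \cong (H^{3-i}(\fgr))^\vee$, which is nonzero precisely for $i = 1$ (rank $d_2$) and $i = 2$ (rank $d_1$). Because the internal differential $l_1$ on the coadjoint semidirect product is block-diagonal—the sum of $d_{\fgr}$ and its transpose on $\fgr^\vee[-3]$, since the coadjoint action enters only through the higher brackets—its cohomology splits as $H^\bullet(\fgr) \oplus H^\bullet(\fgr^\vee[-3])$, concentrated in degrees $1$ and $2$. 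Hence $T^\ast[-1](X,\fg)$ is quasi-smooth.

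For local triviality I would argue that the shifted cotangent construction is invariant under $\L8$ equivalence, and then reduce to the trivial local model. By local triviality of $(X,\fg)$, over a small open set $\fgr$ admits an $\L8$ quasi-isomorphism $f \colon \fgr \to V$ to a graded vector bundle $V$ carrying trivial differential and trivial brackets. I claim this induces an $\L8$ quasi-isomorphism of coadjoint semidirect products $\fgr \oplus \fgr^\vee[-3] \simeq V \oplus V^\vee[-3]$: such an $f$ is invertible up to $\L8$ homotopy, so a homotopy inverse $g \colon V \to \fgr$ furnishes the dual $g^\vee \colon \fgr^\vee \to V^\vee$, and $f \oplus g^\vee[-3]$ is the required map—an $\L8$ morphism for the coadjoint structures that is a quasi-isomorphism on underlying complexes, since both $f$ and $g^\vee$ are. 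Finally, since $V$ has trivial differential and brackets, the coadjoint action it defines on $V^\vee[-3]$ vanishes identically, so $V \oplus V^\vee[-3]$ is again a graded vector bundle with trivial differential and $\L8$ structure. This exhibits $\fgr \oplus \fgr^\vee[-3]$ as locally quasi-isomorphic to a trivial model, giving local triviality and completing the proof.

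The step I expect to be the main obstacle is the claim that the local equivalence $\fgr \simeq V$ lifts to an equivalence of coadjoint semidirect products. The difficulty is the contravariance of dualization: an $\L8$ quasi-isomorphism does not literally dualize to a map in the same direction, so one must genuinely invoke the homotopy-invertibility of quasi-isomorphisms (equivalently, the homotopy-invariance of $T^\ast[-1]$) and check that the resulting map respects the coadjoint brackets rather than merely the underlying complexes. Once this functoriality is in hand, the reduction to the manifestly trivial model $V \oplus V^\vee[-3]$ is immediate.
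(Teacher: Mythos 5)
Your proposal is correct and follows the same overall strategy as the paper: identify $(\fg \oplus \fg^\vee[-3])_{\mathrm{red}}$ with $\fgr \oplus \fgr^\vee[-3]$ and then exploit the local trivial model guaranteed by niceness. The difference is in execution. The paper handles both conditions in one stroke: it takes the local identification $\fgr \cong V^\vee[-1]\otimes_\CC C^\infty_X$ (trivial differential and brackets), dualizes that trivial model directly to obtain $(\fg\oplus\fg^\vee[-3])_{\mathrm{red}} \cong (V^\vee[-1]\oplus V[-2])\otimes_\CC C^\infty_X$, and reads off both local triviality and quasi-smoothness (quasi-smoothness of $(X,\fg)$ forces $V$ into degrees $0$ and $-1$, so $V^\vee[-1]\oplus V[-2]$ sits in degrees $1$ and $2$). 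This sidesteps entirely the contravariance problem you single out as the main obstacle: by dualizing the trivial local model rather than dualizing a quasi-isomorphism, no homotopy inverse and no functoriality of the coadjoint semidirect product are needed. Two cautions about your route. First, the identity $H^i(\fgr^\vee)\cong (H^{-i}(\fgr))^\vee$ does not follow from quasi-smoothness alone: for a complex of locally free modules whose cohomology sheaves fail to be locally free, the dual complex can acquire cohomology in unexpected degrees, so you must already have invoked local triviality (i.e., be sitting on the trivial local model) before running this computation. Second, your claim that $f\oplus g^\vee[-3]$ extends to an $\L8$ quasi-isomorphism of coadjoint semidirect products is true but left unverified, and checking it honestly (specifying the higher Taylor components and the quadratic relations) is more work than the lemma requires. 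Both issues evaporate if you simply perform the entire argument on the local model, as the paper does.
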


\begin{proof}
Note that as $C^\infty_X$-modules we have $(\fg \oplus \fg^\vee [-3])_\mathrm{red} \cong (\fgr \oplus \fgr^\vee [-3])$.  As $(X,\fg)$ is locally trivial, we a have a local isomorphism $\fgr \cong V^\vee [-1] \otimes_\CC C^\infty_X$ for a finite dimensional graded vector space $V$ and consequently we have a local isomorphism 
\[
(\fg \oplus \fg^\vee [-3])_\mathrm{red} \cong (V^\vee[-1] \oplus V[-2]) \otimes_\CC C^\infty_X 
\]
expressing the local triviality of $T^\ast[-1] (X, \fg)$.

That $(X,\fg)$ is nice and in particular quasi-smooth implies that $V$ is concentrated in degree 0 and $-1$.  Since $(\fg \oplus \fg^\vee [-3])_\mathrm{red}$ is quasi-isomophic locally to an $\L8$ algebra with no differential (or higher brackets) quasi-smoothness of $T^\ast [-1] (X, \fg)$ follows from observing that $V^\vee [-1] \oplus V[-2]$ (and hence the cohomology sheaves) is again concentrated in degrees 1 and 2.
\end{proof}

The same technique proves the following.

\begin{prop}\label{smoothnice}
\mbox{}
\begin{itemize}
\item Let $(X, \fg_X)$ denote the $\L8$ space encoding the smooth geometry of $X$, then $T[-1] (X, \fg_X)$ is nice.
\item Let $(Y, \fg_{Y_{\overline{\partial}}})$ be the $\L8$ space encoding the complex structure of a complex manifold $Y$, then $T[-1] (Y, \fg_{Y_{\overline{\partial}}})$ is nice.
\end{itemize}
\end{prop}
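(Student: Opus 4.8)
The plan is to follow exactly the technique of the preceding lemma, computing the reduced $\L8$ algebra of $T[-1](X,\fg_X)$ directly and then checking local triviality and quasi-smoothness by hand. The essential point — and the reason the statement holds for these particular spaces even though $T[-1]$ does \emph{not} preserve niceness for a general nice space — is that for the $\L8$ spaces encoding smooth or complex geometry the reduced algebra $\fgr$ has cohomology concentrated in the single degree $1$, rather than spread across degrees $1$ and $2$.

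First I would record the graded structure produced by $T[-1]$. Using $T[-1](X,\fg) = (X,\fg[\epsilon])$ with $\epsilon$ a square-zero parameter of degree $1$, the underlying graded module is $\fg \oplus \fg[-1]$, so on reduced algebras
\[
(\fg[\epsilon])_{\mathrm{red}} \cong \fgr \oplus \fgr[-1]
\]
as graded $\cinf_X$-modules. Local triviality of $T[-1](X,\fg_X)$ is then immediate: since $- \otimes_\CC \CC[\epsilon]$ is functorial, any local equivalence of $\fgr$ to the sections of a graded vector bundle $V$ with trivial differential and $\L8$ structure base-changes to the corresponding local equivalence for $(\fg[\epsilon])_{\mathrm{red}}$, with relevant bundle $V \oplus V[-1]$.

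For quasi-smoothness I would invoke the explicit model of $\fg_X$. By Lemma \ref{tangentL8} we have $\fg_X \cong T_X[-1] \otimes_{\cinf_X} \Omega^\#_X$, so that $(\fg_X)_{\mathrm{red}} \cong T_X[-1]$ is concentrated, up to local equivalence, in degree $1$. Consequently $\fgr[-1]$ is concentrated in degree $2$, and the sum $\fgr \oplus \fgr[-1]$ has cohomology sheaves only in degrees $1$ and $2$, which is precisely quasi-smoothness. The complex case is identical: the reduced algebra of $\fg_{Y_{\overline{\partial}}}$ is the holomorphic tangent bundle placed in degree $1$ (see \cite{Cos3}), so the same shift computation applies verbatim.

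The step carrying the real content — and the one I expect to be the main obstacle — is verifying that $\fgr$ genuinely sits in degree $1$ alone. For a general nice space $\fgr$ may have cohomology in both degrees $1$ and $2$, in which case $\fgr[-1]$ would contribute in degree $3$ and $T[-1]$ would fail to be quasi-smooth; the proposition is true only because the geometric origin of $\fg_X$ (and of $\fg_{Y_{\overline{\partial}}}$) pins the reduced complex to a single degree. This concentration is exactly what Lemma \ref{tangentL8} supplies in the smooth case and what the analogous construction of \cite{Cos3} supplies in the complex case; once it is in hand, the remaining degree bookkeeping is routine.
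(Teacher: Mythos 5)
Your proof is correct and is essentially the argument the paper intends: the paper proves the preceding lemma (that $T^\ast[-1]$ preserves niceness) by reducing to the decomposition of $\fgr$ and degree bookkeeping, and then simply asserts that ``the same technique proves'' this proposition; your computation of $(\fg[\epsilon])_{\mathrm{red}} \cong \fgr \oplus \fgr[-1]$ together with the concentration of $(\fg_X)_{\mathrm{red}}$ in degree $1$ is exactly that technique. You have also correctly isolated the reason the statement is special to these geometric examples, a point the paper itself makes immediately after the proposition when it notes that $T[-1]$ of a general nice space may acquire degree $3$ cohomology.
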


In general if $(X, \fg)$ is nice, $T[-1] (X, \fg)$ is not necessarily nice as we may have non-trivial degree 3 cohomology.

\section{One-Dimensional Chern--Simons Theory and its Quantization}\label{sect:1d}

Here we briefly describe a family of field theories which we call one-dimensional Chern--Simons theories.  We are working in Costello's paradigm of effective BV theory as described in \cite{Cos1} (and Appendix \ref{app:BV2}), see also \cite{GGCS} for further details on Chern--Simons type theories.

Although Chern--Simons theory typically refers to a gauge theory on a 3-manifold, the perturbative theory has analogues over a manifold of any dimension. The only modification is to use dg Lie algebras, or $\L8$ algebras, with an invariant pairing of the appropriate degree.

\subsection{The Classical Theory}\label{sect:1dcs}

Let $\fg$ denote a curved $\L8$ algebra over a commutative dga $A$, and let $\{\ell_n\}_{n \ge 0}$ denote the $n$-ary brackets of $\fg$. Note that the sum $\fg \oplus \fg^\vee[-2]$ is equipped with an $\L8$ structure using the coadjoint action.
Moreover, $\fg \oplus \fg^\vee[-2]$ also has a natural pairing
\[
\langle \xi + \lambda, \zeta + \mu \rangle = \lambda(\zeta) - \mu(\xi),
\]
which is invariant by construction.

Our Chern--Simons theory then has a space of fields 
\[
\Omega_{S^1} \ot \left( \fg[1] \oplus \fg^\vee[-1] \right).
\]
The (classical) action functional is given by
\[
S(\phi) = \frac{1}{2}\langle \phi, d \phi \rangle + \sum_{n = 0}^\infty \frac{1}{(n+1)!} \langle \phi, \ell_n(\phi^{\ot n}) \rangle.
\]
The Euler-Lagrange equation of $S$ is the Maurer--Cartan equation for the trivial $\fg \oplus \fg^\vee[-2]$-bundle on $S^1$.
We view the action $S$ as a sum of a free action functional $\frac{1}{2}\langle \phi, (d + l_1) \phi \rangle $ and an interaction term $I_{CS} = \sum_{n \neq 1} \frac{1}{(n+1)!} \langle \phi, \ell_n(\phi^{\ot n}) \rangle.$ Note that this theory is a cotangent theory.

\subsection{Quantization of One-Dimensional Chern--Simons}\label{sect:1dcsquant}

We recall the main theorems from \cite{GGCS} which describe the global classical and quantum observables for one-dimensional Chern--Simons theory determined by an $\L8$ algebra $\fg$. (It is non-trivial to see that a quantization exists after which we can begin to identify the complex of global observables.) To begin, we note that the classical observables on $S^1$ are given by the commutative dg algebra
\[
\left ( \csym \left ( \left ( \Omega^\ast_{S^1} \otimes \left ( \fg [1] \oplus \fg^\vee [-1] \right) \right)^\vee \right ) , d + \{ I_{CS} , - \} \right ),
\]
where $I_{CS}$ is the interacting part of the action functional as described in preceding section.

\begin{theorem}\label{thm:QCS}
The one-dimensional Chern--Simons theory determined by $\fg$ admits a BV quantization.  Moreover, this quantization is a cotangent quantization (in the sense of \cite{CosGw2}).
\end{theorem}

The proof of the preceding is by explicit obstruction calculations contained in \cite{GGCS}.

\begin{theorem}\label{LieThm}
The global quantum observables of the Chern--Simons theory  determined by $\fg$ on $S^1$ are quasi-isomorphic to the following cochain complex:
\[
(\Omega^{-*}_{T^*B\fg}[[\hbar]], \hbar L_\Pi + \hbar\{\log (\hat{A}(B\fg)), -\}),
\]
where $L_\Pi$ denotes the Lie derivative with respect to the canonical Poisson bivector $\Pi$ on $T^* B\fg$. 
\end{theorem}

\begin{cor}
Let $X$ be a complex manifold and ${\displaystyle \fg_{X_{\overline{\partial}}}}$ the $\L8$ algebra which encodes the complex geometry of $X$.  The global quantum observables on $S^1$ for the one-dimensional Chern--Simons theory determined by ${\displaystyle \fg_{X_{\overline{\partial}}}}$ are quasi-isomorphic to
\[
\left ( \Omega^{-\ast}_{hol} (T^\ast X) [[\hbar]], \hbar L_\Pi + \hbar \left \{ \log \left (e^{-c_1 (X)/2} \mathrm{Td} (X) \right ), - \right \} \right ),
\]
where $\Omega^k_{hol}$ denotes the holomorphic $k$-forms.
\end{cor}

\subsection{Relationship to the Derived Loop Space}\label{sect:BVloop}

Recall from Section \ref{sect:dloop}, that $\Omega^\ast(S^1) \otimes \fg$ corresponds to functions on the derived loop space $\widehat{\cL_{dR} B \fg}.$ Equivalently, this $L_\infty$ algebra parametrizes the formal moduli of (nearly) constant maps $\{S^1 \to B \fg\}$. The one-dimensional Chern--Simons theory described above is just the cotangent theory associated to this moduli problem.  

Let $X$ be a smooth manifold and $\fg_X$ the associated sheaf of $L_\infty$ algebras.  In this case, $\widehat{\cL_{dR} X}$ describes a family of formal moduli problems and hence, after passing to the associated cotangent theory, a family of field theories over $X$. This family of theories corresponds to the moduli of maps $\{S^1 \to X \subset T^\ast X\}$.  

Finally, note that $\fg[1] \oplus \fg^\vee [-1]$ has a degree zero symplectic structure.  As such, it leads to a one-dimensional theory via the AKSZ construction \cite{AKSZ}.  In \cite{GGLoop}, we show this construction again leads to our one-dimensional Chern--Simons theory.

\section{Projective Volume Forms}\label{sect:vol}

In this section we discuss the notion of projective volume forms on $\L8$ spaces, setting the stage for interpreting $\hat{A} (B \fg)$ as such a volume form on $T[-1] (X, \fg)$.

Let $B \fg = (X, \fg)$ be an $\L8$ space.  Motivated by complex geometry, Costello \cite{CosSUSY} makes the following definition.\footnote{See also Section 10.4 \cite{CosGw2} for an explicit relationship between cotangent quantizations and projective volume forms.}

\begin{definition}\label{defn:PVF}
A \emph{projective volume form} on $(X, \fg)$ (equivalently on $B\fg$) is a right $D(B \fg)$-module structure on $\sO(B \fg)$.
\end{definition}

Here $D(B \fg)$ is the associative algebra of differential operators on $B \fg$.  Recall that the dg Lie algebra of vector fields on $B \fg$ is given by 
\[
\mathrm{Vect}(B \fg) = C^\ast (\fg , \fg[1] ) = \mathrm{Der} (\sO (B \fg)).
\]
Then we define $D(B \fg)$ to be the free associative algebra over $\sO (B \fg)$ generated by $\chi \in \mathrm{Vect} (B \fg)$ subject to the relations:
\[
\begin{array}{rll}
\chi \cdot f - f \cdot \chi &=& (\chi f)\\
f \cdot \chi &=& f \chi
\end{array}
\]
where $\cdot$ denotes the associative product in $D(B \fg)$ and juxtaposition indicates the (left) action of $\mathrm{Vect} (B \fg)$ on $\sO (B \fg)$ by derivations or the (left) module structure of $\mathrm{Vect} (B \fg)$ over $\sO (B \fg)$.

\begin{prop}[Proposition 11.7.1 of \cite{CosSUSY}]\label{prop:projvol}
There is a bijection between the set of right $D(B \fg)$-structures on $\sO(B \fg)$ and that of $\CC^\times$-equivariant quantizations of the $P_0$ algebra $\sO (T^\ast [-1] B \fg)$.
\end{prop}

\subsection{Integrable Volume Forms}

As we saw above, we can think of the BV laplacian as a divergence operator.  That is, if $\omega$ is a volume form on $T^\ast [-1] X$, then the operator $\Delta_\omega$ is given by $\mathrm{Div}_\omega$.  Turning this correspondence around we can ask when a projective volume form $\omega$ on $(X, \fg)$ is {\it integrable} and hence leads to an appropriate integral 
\[
\int : H^0 (\sO (X, \fg)) \to \CC .
\]

Let $\omega$ be a projective volume form on $(X, \fg)$ corresponding the quantization of $T^\ast [-1] (X, \fg)$ with BV laplacian $\Delta_\omega$ i.e. the bracket of the BD algebra is given by
\[
\{a,b\} = \Delta_\omega (\alpha \beta) - (\Delta_\omega \alpha) \beta - (-1)^{\lvert \alpha \rvert} \alpha (\Delta_\omega \beta) .
\]

\begin{definition}
The {\it divergence complex} associated to $\omega$ is defined by
\[\mathrm{Div}^\ast (\omega) = ( \sO (T^\ast [-1] (X, \fg))((\hbar)), d + \hbar \Delta_\omega).
\]
We let $\cH^i (\mathrm{Div}^\ast (\omega))$ denote the i'th cohomology sheaf of the divergence complex.
\end{definition}

\begin{lemma}
For each $i$, $\cH^i (\mathrm{Div}^\ast (\omega))$ is a sheaf of $\CC ((\hbar))$-modules and carries a $\CC^\times$ action lifting the action of $\CC^\times$ on $\CC ((\hbar))$ where $\hbar$ has weight $-1$.
\end{lemma}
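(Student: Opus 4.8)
The plan is to separate the two assertions: the $\CC((\hbar))$-module structure is essentially formal, whereas the $\CC^\times$-action requires exhibiting a grading on the divergence complex that the differential $d + \hbar\Delta_\omega$ preserves. First I would dispose of the module structure. By construction $\mathrm{Div}^\ast(\omega)$ is a complex of sheaves of $\CC((\hbar))$-modules with underlying sheaf $\sO(T^\ast[-1](X,\fg))((\hbar))$, so it suffices to note that the differential $d + \hbar \Delta_\omega$ is $\CC((\hbar))$-linear: the operator $d$ is $\CC$-linear on $\sO(T^\ast[-1](X,\fg))$ and extends $\CC((\hbar))$-linearly, $\Delta_\omega$ is likewise $\CC$-linear and extends $\CC((\hbar))$-linearly, and multiplication by $\hbar$ is $\CC((\hbar))$-linear. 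Cohomology sheaves of a complex of $\CC((\hbar))$-modules with $\CC((\hbar))$-linear differential are again $\CC((\hbar))$-modules, giving the first claim.

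For the $\CC^\times$-action I would make the relevant grading explicit. The action scaling the cotangent fiber $\fg^\vee[-3]$ of $T^\ast[-1](X,\fg)$ (as in the footnote to the definition of quantization) induces, via the identification $\sO(T^\ast[-1](X,\fg)) = \csym(\fg^\vee[-1] \oplus \fg[2])$ of Lemma \ref{lem:p0structure}, an action giving weight $0$ to the function directions $\fg^\vee[-1]$ and weight $-1$ to each polyvector direction $\fg[2]$, since functions transform inversely to the scaled fiber coordinates. Thus a polyvector of degree $p$ has weight $-p$, i.e. the $\CC^\times$-grading is minus the polyvector degree, and I extend it to $\sO(T^\ast[-1](X,\fg))((\hbar))$ by declaring $\hbar$ to have weight $-1$.

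The heart of the argument is to verify that $d + \hbar\Delta_\omega$ has weight $0$. I would check this in two pieces. The internal differential $d$ is the Chevalley--Eilenberg differential of the $\L8$ structure on $\fg \oplus \fg^\vee[-3]$, whose Taylor components come from the brackets of $\fg$ and from the coadjoint action; each of these preserves the number of $\fg^\vee[-3]$ factors, hence $d$ preserves the polyvector grading and is weight $0$. The operator $\Delta_\omega$ lowers polyvector degree by one (it maps $\Gamma(B\fg, \Lambda^i TB\fg) \to \Gamma(B\fg, \Lambda^{i-1} TB\fg)$), so it raises the $\CC^\times$-weight by one, and with $\hbar$ of weight $-1$ the composite $\hbar\Delta_\omega$ is weight $0$. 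Therefore $\mathrm{Div}^\ast(\omega)$ is a complex of $\CC^\times$-representations with weight-$0$ differential, the action descends to each $\cH^i(\mathrm{Div}^\ast(\omega))$, and compatibility with the module structure is immediate because the action map $\CC((\hbar)) \otimes \cH^i \to \cH^i$ is $\CC^\times$-equivariant over $\hbar \mapsto \lambda^{-1}\hbar$.

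The only genuinely non-formal step is pinning down the weight of $\Delta_\omega$ to be exactly $+1$, and I expect this to be the main point. It is guaranteed by hypothesis rather than by computation: projective volume forms are, by Proposition \ref{prop:projvol}, in bijection with $\CC^\times$-\emph{equivariant} quantizations, and equivariance of the quantization is precisely the statement that $d + \hbar\Delta_\omega$ is weight $0$ with $\hbar$ of weight $-1$. So the verification above is really a transcription of the equivariance built into the definition of $\omega$, and no further obstruction arises.
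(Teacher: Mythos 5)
Your proposal is correct, and it is exactly the verification the paper has in mind: the paper states this lemma without proof, treating it as the formal unpacking of the definitions (the $\CC((\hbar))$-linearity of $d+\hbar\Delta_\omega$, and the weight-zero property of that differential for the grading in which polyvector degree $p$ has weight $-p$ and $\hbar$ has weight $-1$, which is precisely the $\CC^\times$-equivariance built into Proposition \ref{prop:projvol}). Nothing further is needed.
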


The following lemma shows that $\mathrm{Div}^\ast (\omega)$ is quasi-isomorphic to a local system of $\CC((\hbar))$ lines.  Later we will take $\CC^\times$ invariants to obtain a system of $\CC$ lines which in good cases, i.e., when $\omega$ is {\it integrable}, we can identify with the orientation local system on $X$.

\begin{lemma}[7.8.1 of \cite{CosWG}]
Let $(X, \fg)$ be a nice $\L8$ space, and let $d_i$ denote the rank of $H^i (\fgr)$.  Then, for any projective volume form $\omega$ on $(X, \fg)$, the cohomology sheaves $\cH^i (\mathrm{Div}^\ast (\omega))$ are zero except for $i = -d_1 -d_2.$  Further, $\cH^{-d_1-d_2} (\mathrm{Div}^\ast (\omega))$ is a locally constant sheaf of $\CC((\hbar))$ vector spaces of rank one.
\end{lemma}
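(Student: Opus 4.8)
The plan is to reduce to a local computation on $X$, since the vanishing of the cohomology sheaves, their rank, and local constancy are all local assertions, and then to globalize using the flat connection implicit in the $\Omega^\ast_X$-structure. Since $(X,\fg)$ is nice, hence locally trivial, I would first restrict to a chart on which $\fgr \cong V^\vee[-1]\ot_\CC C^\infty_X$ with $V = \CC^{d_1}\oplus\CC^{d_2}[1]$ carrying trivial differential and brackets. Unwinding $T^\ast[-1](X,\fg) = (X,\fg\oplus\fg^\vee[-3])$ as in Lemma~\ref{lem:p0structure}, the functions then take the local form
\[
\sO(T^\ast[-1](X,\fg)) \;\cong\; C^\infty_X \ot_\CC \csym_\CC\!\big(\fgr^\vee[-1]\oplus\fgr[2]\big),
\]
where the summands $\fgr^\vee[-1]\cong V$ and $\fgr[2]\cong V^\vee[1]$ are exchanged by the degree $+1$ Poisson bracket. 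This exhibits $d_1+d_2$ conjugate pairs of generators, each consisting of one generator of degree $0$ and one of degree $-1$. The observation that makes an arbitrary $\omega$ tractable is that, by Proposition~\ref{prop:projvol}, $\Delta_\omega$ has the canonical constant-coefficient BV Laplacian $\Delta_0$ pairing conjugate variables as its second-order part; the dependence on $\omega$ enters only through a strictly lower-order (first-order) correction $\hbar\{h,-\}$, where $h$ lies in the nilpotent ideal.

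The core of the argument is the fiberwise cohomology of $\Delta_0 = \sum_i \partial_{x_i}\partial_{\xi_i}$. Because this operator factors over the conjugate pairs, a K\"unneth reduction leaves a single pair $(x,\xi)$, and it suffices to treat the case $|x|=0,\ |\xi|=-1$ (the pairs with reversed parities are identical). There $\csym = \CC[x]\ot\Lambda[\xi]$ and $\Delta_0(x^n\xi)=n\,x^{n-1}$, so the cohomology is the single line $\CC\cdot\xi$ in degree $-1$. Tensoring over all $d_1+d_2$ pairs, the fiberwise $\Delta_0$-cohomology is one-dimensional and concentrated in degree $-(d_1+d_2)=-d_1-d_2$.

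I would then organize the full differential $d+\hbar\Delta_\omega$ via a filtration — by symmetric degree in the generators, refined by the nilpotent-ideal degree — whose associated-graded differential is $\hbar\Delta_0$ together with the de Rham differential coming from $\Omega^\ast_X$, and for which the higher $\L8$-brackets and the $\omega$-dependent term $\hbar\{h,-\}$ strictly raise the filtration (this uses that $h$ sits in high symmetric degree in the trivial model). The resulting spectral sequence computes, on the one hand the fiberwise Koszul cohomology of the previous paragraph, and on the other the de Rham cohomology in the $X$-directions; the outcome is supported in the single total degree $-d_1-d_2$. Being concentrated in one degree, the sequence degenerates for degree reasons, which simultaneously yields $\cH^i(\mathrm{Div}^\ast(\omega))=0$ for $i\neq -d_1-d_2$ and shows that the answer does not depend on the chosen $\omega$.

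It remains to identify $\cH^{-d_1-d_2}(\mathrm{Div}^\ast(\omega))$ as a local system. The fact that $\fg$ is a sheaf of $\L8$ algebras over the de Rham complex $\Omega^\ast_X$ equips the surviving complex, after the fiber directions are collapsed, with a de Rham differential, i.e. a flat connection, on the rank-one Berezinian line produced by the Koszul computation; by the Poincar\'e lemma its cohomology sheaf is exactly the sheaf of flat sections, a locally constant sheaf of $\CC((\hbar))$-lines of rank one. I expect this last step to be the main obstacle: upgrading the fiberwise rank-one $C^\infty_X((\hbar))$-module to a genuine local system requires pinning down the flat connection supplied by the $\Omega^\ast_X$-structure and checking that $\Delta_\omega$ is horizontal for it, rather than merely counting cohomological degrees. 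A more routine, but still necessary, point is to verify that the chosen filtration is exhaustive and complete so the spectral sequence converges — completeness along the nilpotent ideal together with the single-degree concentration of the limit should make this automatic.
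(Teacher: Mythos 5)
Your argument is correct and follows essentially the same route the paper takes (in its proof of the immediately following generalization, which it explicitly models on Costello's proof of this lemma): trivialize locally using niceness, filter so that the associated graded differential is the constant-coefficient $\Delta_0$ with the $\omega$-dependent term $\{S_\omega,-\}$ strictly raising filtration, and then compute the $\Delta_0$-cohomology to find a single line in degree $-d_1-d_2$. Your pair-by-pair K\"unneth/Koszul computation is just the paper's ``translate to polyvector fields and apply the formal Poincar\'e lemma after inverting $\hbar$'' step in different packaging, and your closing caveat about pinning down the flat connection on the resulting line is a fair flag of a point the paper itself leaves implicit.
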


We noted above, that in general $T[-1] (X, \fg)$ is not nice, even if $(X, \fg)$ is nice. As a preliminary we need the following lemma which is a slightly more general version of the preceding lemma.

\begin{lemma}
Let $(X, \fg)$ be a nice $\L8$ space. Then for any projective volume form $\omega$ on $T[-1] (X,\fg)$, the cohomology sheaves $\cH^i (\mathrm{Div}^\ast (\omega))$ are zero unless $i = -2d_1$, where $d_1 = \dim H^1 (\fgr)$.  Further, $\cH^{-2d_1} (\mathrm{Div}^\ast (\omega))$ is a locally constant rank one sheaf of $\CC ((\hbar))$ vector spaces.
\end{lemma}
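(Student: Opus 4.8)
The plan is to imitate the proof of the preceding lemma (Lemma 7.8.1 of \cite{Cos3}), the only genuinely new feature being that $T[-1](X,\fg)$ need not be quasi-smooth. First I would record that $T[-1](X,\fg)$ is nonetheless locally trivial: its reduced structure sheaf is $(\fg[\epsilon])_{\mathrm{red}} \cong \fgr \otimes_\CC \CC[\epsilon]$, and tensoring with the fixed graded algebra $\CC[\epsilon]$ preserves the property of being locally quasi-isomorphic to a graded vector bundle with trivial differential and brackets (this is the same mechanism as in Proposition \ref{smoothnice}). Writing $\fgr$ locally as $V^\vee[-1]\otimes_\CC C^\infty_X$ with $V = \CC^{d_1}\oplus\CC^{d_2}[1]$, the reduced structure sheaf of $T[-1](X,\fg)$ then has cohomology of ranks $d_1$, $d_1+d_2$, $d_2$ concentrated in degrees $1$, $2$, $3$. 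The degree-$3$ part (rank $d_2$) is precisely what makes the space fail to be quasi-smooth, so the preceding lemma does not apply verbatim.

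Since the cohomology sheaves $\cH^i(\mathrm{Div}^\ast(\omega))$ are what we must compute, the statement is local on $X$, and I would pass to the local formal model where $\fgr$ carries trivial differential and $\L8$ structure. There the BV Laplacian $\Delta_\omega$ attached to $\omega$ (via Proposition \ref{prop:projvol}) agrees to leading order with the constant-coefficient BV Laplacian $\Delta_0$ of the linear $(-1)$-shifted symplectic structure on $T^\ast[-1]T[-1](X,\fg)$, the difference lying in the nilpotent ideal. I would filter $\mathrm{Div}^\ast(\omega)$ by powers of this ideal, so that the associated graded differential becomes $\hbar\Delta_0$, and run the corresponding spectral sequence; its first page is the cohomology of $(\sO(T^\ast[-1]T[-1](X,\fg))((\hbar)),\ \hbar\Delta_0)$.

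The key computation is then the cohomology of this constant-coefficient divergence complex. Writing $\mathfrak{G} = \fg[\epsilon]$, the generators of $\sO(T^\ast[-1](X,\mathfrak{G})) = \csym(\mathfrak{G}^\vee[-1]\oplus\mathfrak{G}[2])$ fall into dual pairs, one for each basis element of $\mathfrak{G}$: a generator $\xi$ from $\mathfrak{G}^\vee[-1]$ of degree $a$ together with its partner $\eta$ from $\mathfrak{G}[2]$ of degree $b=-1-a$, with $\Delta_0$ contracting $\xi$ against $\eta$. Since $a+b=-1$ is odd, exactly one of $a,b$ is odd; a direct Koszul computation over $\CC((\hbar))$ shows that each such pair contributes a rank-one cohomology concentrated in the odd degree among $\{a,b\}$, and the full complex is the completed tensor product of these. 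Summing the odd degrees over the three blocks of $\mathfrak{G}$ — rank $d_1$ in $\mathfrak{G}$-degree $1$ (odd partner in degree $-1$), rank $d_1+d_2$ in $\mathfrak{G}$-degree $2$ (odd partner in degree $-1$), and rank $d_2$ in $\mathfrak{G}$-degree $3$ (odd partner in degree $+1$) — gives total degree $-d_1 -(d_1+d_2) + d_2 = -2d_1$, matching the claim; as a sanity check, the same bookkeeping applied to a quasi-smooth $(X,\fg)$ recovers Costello's $-d_1-d_2$.

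Because the first page is rank one over $\CC((\hbar))$ and concentrated in the single degree $-2d_1$, the spectral sequence degenerates for degree reasons, yielding $\cH^i(\mathrm{Div}^\ast(\omega)) = 0$ for $i \neq -2d_1$ and $\cH^{-2d_1}(\mathrm{Div}^\ast(\omega))$ of rank one; since $d_1,d_2$ are locally constant and the whole construction is natural in local trivializations, the resulting sheaf is locally constant. I expect the main obstacle to be exactly the non-quasi-smoothness: one must verify that the extra generators in $\mathfrak{G}$-degrees $2$ and $3$ — precisely the classes absent in the quasi-smooth setting — pair up acyclically and contribute only the degree shift computed above rather than new cohomology. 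The Koszul pairing computation is what controls this, together with the usual care in working over $\CC((\hbar))$ rather than $\CC[[\hbar]]$, which is what guarantees the contraction differential has rank-one cohomology despite the infinite-dimensionality of the completed symmetric algebra.
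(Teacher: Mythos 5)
Your proposal is correct and follows essentially the same route as the paper: reduce to a local computation, filter so that the associated graded differential is $\hbar\Delta_0$ for the constant-coefficient BV Laplacian, and then compute that cohomology explicitly, with the degree count $-d_1-(d_1+d_2)+d_2=-2d_1$ coming out exactly as in the paper's basis $x_i,x^i,\alpha_j,\alpha^j,h_k,h^k$. The only cosmetic difference is at the last step, where the paper identifies polyvector fields with forms via a translation-invariant volume form and invokes the formal Poincar\'e lemma, whereas you compute the contraction differential pair by pair --- these are the same computation.
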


\begin{proof}
We compute locally on $X$ the cohomology $\sO(T^\ast[-1] T[-1] (X, \fg) )[[\hbar]]$ with differential $d+\hbar \Delta_\omega$.  We filter $\sO(T^\ast[-1] T[-1](X, \fg))$ by the image of multiplication by $\Omega^i_X$.  We can compute the cohomology by the associated spectral sequence. The first page is given by 
\[
\oplus \Omega^i_X [-i] \otimes_{\cinf_X} C^\ast (\fgr [\epsilon] \oplus \fgr^\vee [\epsilon] [-2]).
\]

As $(X, \fg)$ is nice, we can assume (since we are computing locally) that $\fgr$ has trivial differential and $\L8$ structure and is free as a $\cinf_X$ module.  Therefore, 
\[
C^\ast (\fgr [\epsilon] \oplus \fgr^\vee [\epsilon] [-2] ) \cong \cinf_X \otimes \csym(V\oplus V^\vee [1])
\]
where $V = V_0 \oplus V_{-1} \oplus V_{-2}$ and the subscripts indicate in which degree the vector space lives.  Let's fix a basis $x_i, \alpha_j , h_k$ of $V_0 \oplus V_{-1} \oplus V_{-2}$ and a dual basis $x^i , \alpha^j , h^k$ of $V^\vee[1]$, so the $x^i$ have degree -1 and so on. If we let $d_i = \mathrm{rank} \; \cH^i (\fgr)$, then $\dim V_0 = d_1$, $\dim V_{-1} = d_1 +d_2,$  and $\dim V_{-2} = d_2$. Then
\[
\csym(V \oplus V^\vee [1]) = \CC [[x_i , x^i , \alpha_j , \alpha^j , h_k , h^k ]]
\]
and by assumption we have a $P_0$ structure given by
\[
\{x_i , x^i \}=1=\{\alpha_j , \alpha^j\}=1=\{h_k , h^k\}
\]
with all other brackets vanishing.

Now we have a $\CC^\times$ equivariant quantization corresponding to $\omega$.  Such a quantization is defined by the operator $\Delta_\omega$, where
\[
\Delta_\omega = \Delta_0 + \{S, -\}
\]
for $S \in \CC[[x_i]]$ and 
\[
\Delta_0 = \sum_i \frac{\partial}{\partial x_i} \frac{\partial}{\partial x^i} + \sum_j \frac{\partial}{\partial \alpha_j} \frac{\partial}{\partial \alpha^j} + \sum_k \frac{\partial}{\partial h_k} \frac{\partial}{\partial h^k} .
\]

We now compute the cohomology of $\CC[[x_i , x^i , \alpha_j , \alpha^j , h_k , h^k ,\hbar]]$ with respect to the differential $\hbar \Delta_0 + \hbar \{S, -\}$.  We will do this via a spectral sequence associated to the filtration $F^l \CC [[x_i , x^i , \alpha_j , \alpha^j , h_k , h^k , \hbar]]$ where the $lth$ filtered piece consists of elements of weight greater than or equal to $l$.  Here we give all the generators weight 1 and $\hbar$ weight 2.  Note that the differential $\hbar \Delta$ preserves this filtration.

The operator $\hbar \Delta_0$ preserves weight, while $\hbar \{S, -\}$ strictly increases weight.  Hence the first page of the spectral sequence is given by
\[
E^{p,q}_1 = H^q (F^p \CC[[x_i, x^i , \alpha_j , \alpha^j , h_k , h^k , \hbar]] , \hbar \Delta_0).
\]

In order to apply the formal Poincar\'e lemma we translate into the language of forms and polyvector fields.  That is let $\partial_{x_i}$ and  $\partial_{\alpha^j}$ be of degree -1 and $\partial_{h_k}$ be of degree 1, then we have an isomorphism
\[
\CC[[x_i, x^i , \alpha_j , \alpha^j , h_k , h^k , \hbar]] \cong \CC[[x_i , \partial_{x_i} , \partial_{\alpha^j} , \alpha^j , h_k , \partial_{h_k} , \hbar]]
\]
which sends
\[
\begin{array}{c}
x^i \mapsto \partial_{x_i} \\[1ex]
\alpha_j \mapsto \partial_{\alpha^j} \\[1ex]
h^k \mapsto \partial_{h_k} .
\end{array}
\]
Define the translation invariant volume form
\[
dVol = dx_1 \wedge \dotsb \wedge dx_{d_1} \wedge d \alpha^1 \wedge \dotsb \wedge d \alpha^{d_1 + d_2} \wedge d h_1 \wedge \dotsb \wedge d h_{d_2} .
\]
Then under the above isomorphism $\Delta_0$ corresponds to the divergence with respect to $dVol$. Contraction with $dVol$ turns polyvector fields into forms so we have an isomorphism
\[
\CC[[x_i , x^i , \alpha_j , \alpha^j , h_k , h^k , \hbar]] \cong \CC[[x_i , \alpha^j , h_k , dx_i , d\alpha^j , dh_k , \hbar]][2d_1]
\]
which is an isomorphism of complexes after equipping the right hand side with the de Rham differential.

After inverting $\hbar$ the Poincar\'e lemma shows that  the cohomology is one dimensional and of degree $-2d_1$.
\end{proof}

\begin{remark}
The above lemma has an analog for any $\L8$ space such that $\fgr$ has bounded cohomology.
\end{remark}

In light of Proposition \ref{smoothnice} we need to check the compatibility of the two preceding lemmas.  For $(X, \fg_X)$, $H^i (\fgr)$ is zero unless $i=1$ and in this case has rank $d_1 = \dim X$.  Then 
\[
T[-1] (X, \fg_X) \cong (X, \fg_X \oplus \fg_X [-1]),
\]
so the cohomology of the reduced $\L8$ algebra is concentrated in degrees 1 and 2 with each group being of rank $\dim X$ i.e. $d_1 = d_2 = \dim X$.  Hence, we see that the lemmas concur in this case.

\subsection{Volume Forms on $T[-1] (X, \fg)$}

As noted above, $\cH^{-2d_1} (\mathrm{Div}^\ast (\omega))$ forms a local system of $\CC ((\hbar))$ lines.  By taking $\CC^\times$ invariants we obtain a system of $\CC$ lines (since $\hbar$ has weight $-1$).  Let
\[
\cD (\omega) = \cH^{-2d_1} (\mathrm{Div}^\ast (\omega))^{\CC^\times}
\]
denote this local system of $\CC$ lines.  If we weight $\hbar$ appropriately we have a quasi-isomorphism of sheaves of $\CC((\hbar))$-modules
\[
\cD (\omega) ((\hbar)) [2d_1] \cong \mathrm{Div}^\ast (\omega).
\]

In order to define integration, see below, we need to be able to identify $\cD (\omega)$ with the orientation local system of $X$.

\begin{definition}
A projective volume form $\omega$ on $T[-1] (X, \fg)$ for $(X, \fg)$ nice is {\it integrable} if the local system $\cD (\omega)$ on the manifold $X$ is isomorphic to the orientation local system on $X$.
\end{definition}

\subsubsection{The canonical volume form}

We expand on the sketch in \cite{CosWG} to show that $T[-1] (X, \fg)$ carries a canonical volume form.

\begin{lemma}
Let $(X, \fg)$ be an $\L8$ space, then we have a natural equivalence $T^\ast[-1] T[-1] (X, \fg) \cong T[-1] T^\ast (X, \fg)$.
\end{lemma}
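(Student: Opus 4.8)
The plan is to prove something slightly stronger than asked, namely that the two sides are isomorphic as curved $\L8$ algebras over $\Omega^\ast_X$ covering the identity on $X$; since an isomorphism is in particular an equivalence and every ingredient is functorial in $(X,\fg)$, this yields the asserted \emph{natural} equivalence. First I would compute the two underlying graded $\Omega^\sharp_X$-modules. Using $T[-1](X,\fg) = (X,\fg[\epsilon])$ with $\epsilon$ of degree $1$, so that $\fg[\epsilon] \cong \fg \oplus \fg[-1]$, together with $(\fg[-1])^\vee \cong \fg^\vee[1]$ and $T^\ast[-1](X,\fh) = (X,\fh \oplus \fh^\vee[-3])$, the left-hand side becomes $(X,\ \fg \oplus \fg[-1] \oplus \fg^\vee[-2] \oplus \fg^\vee[-3])$. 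Running the same bookkeeping on the right, with $T^\ast(X,\fg) = (X,\fg \oplus \fg^\vee[-2])$ and $\fk[\epsilon] \cong \fk \oplus \fk[-1]$, gives $(X,\ \fg \oplus \fg^\vee[-2] \oplus \fg[-1] \oplus \fg^\vee[-3])$. Thus the two underlying graded modules already agree, and the content of the lemma is that the two curved $\L8$ structures coincide under a natural identification.

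Next I would make both structures explicit and conceptual. By the derived-loop-space discussion (Proposition \ref{prop:derivedloop}) and the definition of a vector bundle, where brackets vanish on two or more fibre sections (here forced by $\epsilon^2 = 0$), the functor $T[-1]$ is simply $-\otimes_\CC \CC[\epsilon]$ with the $\ell_n$ extended $\CC[\epsilon]$-multilinearly. The functors $T^\ast$ and $T^\ast[-1]$ are the coadjoint semidirect constructions $\fa \mapsto \fa \oplus \fa^\vee[-2]$, respectively $\fa \mapsto \fa \oplus \fa^\vee[-3]$, whose brackets are determined by those of $\fa$ and the coadjoint action built from the tautological evaluation pairing, exactly as in Section \ref{sect:1dcs}. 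Hence the left-hand side is $\fg[\epsilon] \oplus (\fg[\epsilon])^\vee[-3]$ with its coadjoint structure, while the right-hand side is $(\fg \oplus \fg^\vee[-2]) \otimes \CC[\epsilon]$ with the $\CC[\epsilon]$-linear extension of the coadjoint structure.

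The key step is to identify $(\fg[\epsilon])^\vee[-3]$ with $\fg^\vee[-2] \otimes \CC[\epsilon]$ in such a way that the coadjoint action of $\fg[\epsilon]$ becomes the $\CC[\epsilon]$-linear extension of the coadjoint action of $\fg$. The crucial observation is that the evaluation pairing on $\CC[\epsilon]$ is the residue (Berezin) pairing, equivalently the Poincar\'e duality pairing on $\cH(S^1)$ coming from the volume form fixed in Proposition \ref{prop:derivedloop}, which exchanges the generators $1$ and $\epsilon$. Under this exchange the dual of the base copy of $\fg$ (sitting in $\fg^\vee[-3]$) is matched with the $\epsilon$-copy of the cotangent fibre, while the dual of the odd tangent copy $\fg[-1]$ (sitting in $\fg^\vee[-2]$) is matched with the cotangent fibre itself; on the remaining summands the comparison map is the identity. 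This is precisely the $\L8$-theoretic shadow of the classical canonical isomorphism $T^\ast TM \cong TT^\ast M$, the apparent ``swap'' of geometric roles being encoded entirely in the residue identification of the dual summands. With this identification fixed, it suffices to verify compatibility of the two structures on the action of a base element $x \in \fg$ and of an odd element $\epsilon x'$ on each of the two cotangent slots; a short computation using $\epsilon^2 = 0$ and coadjoint invariance shows both actions reduce to the ordinary coadjoint action of $\fg$, possibly shifted by $\epsilon$. Since both the $\CC[\epsilon]$-extension and the coadjoint construction are $\CC[\epsilon]$-linear and natural in $\fg$, the agreement then propagates to all higher brackets $\ell_n$ simultaneously.

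The main obstacle is sign bookkeeping: the three shifts $[-1],[-2],[-3]$ and the degree-$1$ parameter $\epsilon$ produce Koszul signs, and the residue pairing contributes the sign appearing in the classical symplectomorphism $T^\ast TM \cong TT^\ast M$, namely the Tulczyjew sign $(x,v,\xi,\eta)\mapsto (x,\eta,v,-\xi)$. These must be chosen consistently so that the summand-matching map is an isomorphism of curved $\L8$ algebras rather than merely of graded modules. Once the signs are pinned down, naturality in $(X,\fg)$ is automatic, since every operation used, namely shifting, dualizing, the coadjoint action, the residue pairing, and $-\otimes_\CC \CC[\epsilon]$, is functorial, and this delivers the natural equivalence $T^\ast[-1]T[-1](X,\fg) \cong T[-1]T^\ast(X,\fg)$.
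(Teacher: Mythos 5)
Your proposal is correct and follows essentially the same route as the paper: both match the underlying graded modules and then hinge on the identification $(\fg[\epsilon])^\vee[-3]\cong\fg^\vee[\epsilon][-2]$ via the degree $-1$ trace $\CC[\epsilon]\to\CC$, $\epsilon\mapsto 1$ (your residue pairing), with the $\L8$ structures given by the $\CC[\epsilon]$-linear extension of the coadjoint action. Your added remarks on the Tulczyjew sign and the classical $T^\ast TM\cong TT^\ast M$ are consistent elaborations rather than a different argument.
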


\begin{proof}
\[\begin{array}{lll}
T^\ast[-1] T[-1] (X, \fg) &=&\left (X, \fg[\epsilon] \oplus \left ( \fg [\epsilon ] \right)^\vee [-3] \right ) \\[2ex]
&=& \left (X, \fg [\epsilon] \oplus \fg^\vee [\epsilon] [-2]  \right ) \\[2ex]
&=& \left (X, \left (\fg \oplus \fg^\vee [-2] \right ) [\epsilon] \right ) \\[2ex]
&=& T[-1] T^\ast (X, \fg) .
\end{array}
\]
There is some subtlety in the identifications above, we need to define the relevant $\L8$ structures.  The $\L8$ structure on $\fg[\epsilon]$ is the one obtained by the product of the commutative algebra $\CC[\epsilon]$ and $\L8$ algebra $\fg$. Recall that we have the coadjoint action of $\fg$ on $\fg^\vee$, we extend this $\epsilon$-linearly to define an $\L8$ structure on $\fg[\epsilon] \oplus \fg^\vee [\epsilon][-2]$.  The degree -3 pairing on $(\fg \oplus \fg^\vee [-2]) [\epsilon]$ is the composition of the $\epsilon$-linear $\CC[\epsilon]$ valued degree -2 pairing with the degree -1 map $\CC [\epsilon] \to \CC$ sending $\epsilon$ to 1.  With these structures just defined, the identifications above are  natural equivalences of $\L8$ spaces.
\end{proof}

\begin{prop}
$T[-1] (X, \fg)$ carries a canonical volume form $\mathrm{dVol}_0$.  That is there exists a square zero operator $\Delta_0$ on $C^\ast (\fg [\epsilon] \oplus \fg^\vee [\epsilon] [-2])$ such that $[d, \Delta_0] =0$, $\Delta_0$ has weight one with respect to the $\CC^\times$ action, and the failure of $\Delta_0$ to be a derivation is the Poisson bracket.
\end{prop}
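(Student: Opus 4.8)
The strategy is to exhibit $\Delta_0$ as the divergence operator of a canonical volume form on $T[-1](X,\fg)$ and then to extract each of the four required properties, three of them formally and the last by a unimodularity argument. By the preceding lemma we may identify $\sO(T^\ast[-1]T[-1](X,\fg))$ with $C^\ast((\fg\oplus\fg^\vee[-2])[\epsilon])$, equipped with the degree $+1$ Poisson bracket induced by the degree $-3$ pairing; thus what we seek is a $\CC^\times$-equivariant quantization of this $P_0$ algebra, and by Proposition~\ref{prop:projvol} this is the same datum as a (projective) volume form on $T[-1](X,\fg)$. I would therefore reduce the proposition to producing one preferred such volume form together with the verification that its divergence operator is square zero and compatible with $d$.

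To build $\mathrm{dVol}_0$ I would use the standard fact that a shifted tangent bundle carries a tautological volume form: a change of coordinates on the base $(X,\fg)$ acts on the odd fibre directions (the $\epsilon$-copy of $\fg$) by the inverse of its Berezinian on the base, and the two contributions cancel, so the translation-invariant fibrewise form is coordinate-independent. Concretely, in the nice local model used in the proof of the divergence lemma above---where $\fgr$ has trivial differential and is a free $\cinf_X$-module with basis $x_i,\alpha_j,h_k$ and conjugate fibre coordinates $x^i,\alpha^j,h^k$---$\mathrm{dVol}_0$ is the translation-invariant form and its divergence is the constant-coefficient operator
\[
\Delta_0 \;=\; \sum_i \frac{\partial}{\partial x_i}\frac{\partial}{\partial x^i} + \sum_j \frac{\partial}{\partial \alpha_j}\frac{\partial}{\partial \alpha^j} + \sum_k \frac{\partial}{\partial h_k}\frac{\partial}{\partial h^k},
\]
which is exactly the operator $\Delta_0$ appearing in that lemma.

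Three of the asserted properties then follow cheaply. That $\Delta_0$ is an order-two operator whose failure to be a derivation is precisely $\{-,-\}$ is the defining relation of the divergence operator of a volume form, and is immediate from the local expression together with $\{x_i,x^i\}=\{\alpha_j,\alpha^j\}=\{h_k,h^k\}=1$. Squaring the constant-coefficient expression gives $\Delta_0^2=0$ at once (equivalently, the divergence of a fixed volume form squares to zero). Finally, $\Delta_0$ removes one fibre (momentum) generator, so it is homogeneous of weight $+1$ for the $\CC^\times$-action scaling the $\fg^\vee$-directions, as required.

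The one substantive point---and the step I expect to be the main obstacle---is the compatibility $[d,\Delta_0]=0$. Here I would argue that $d$ preserves the canonical volume form, i.e. $L_d\,\mathrm{dVol}_0=0$. The cleanest route is to note that the $\L8$ structure on $(\fg\oplus\fg^\vee[-2])[\epsilon]$ is cyclic for the degree $-3$ pairing (the coadjoint action was built to be invariant for $\langle X+\lambda,Y+\mu\rangle=\lambda(Y)-\mu(X)$), so the Chevalley--Eilenberg differential is Hamiltonian, $d=\{S,-\}$. Since $\Delta_0$ is a derivation of the bracket, the usual BV identity gives $[d,\Delta_0]=\pm\{\Delta_0 S,-\}$, reducing the claim to the classical master equation $\Delta_0 S=0$. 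This last vanishing is exactly the unimodularity of the construction: $\mathrm{dVol}_0$ is canonical, hence invariant under the automorphisms generated by the $\fg$-structure, and the coadjoint lift to the $\fg^\vee$-directions is by definition the divergence-free lift, so the base and fibre contributions to $\Delta_0 S$ cancel in pairs. Making this cancellation precise---tracking the supertraces of the higher brackets $\ell_n$ in the doubled variables and checking that they annihilate $S$---is the computational heart of the proof, and is where the odd/Berezinian structure of the shifted tangent bundle does the essential work.
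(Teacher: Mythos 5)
Your proposal lands on the same operator as the paper but reaches it, and verifies it, by a genuinely different route. The paper's proof is a one-line global construction: let $K \in \fg \otimes \fg^\vee$ be the inverse of the canonical pairing, set
\[
\widetilde{K} = (\epsilon \otimes 1 + 1 \otimes \epsilon)K \in \bigl(\fg[\epsilon] \oplus \fg^\vee[\epsilon][-2]\bigr)^{\otimes 2},
\]
and define $\Delta_0$ to be contraction with $\widetilde{K}$; in your local model this is exactly your constant-coefficient operator, so the two constructions agree where they overlap. Where you diverge is on the one substantive point you correctly isolate, namely $[d,\Delta_0]=0$. You write $d=\{S,-\}$ and reduce the claim to $\Delta_0 S=0$, to be checked by a supertrace cancellation between the $\fg$- and $\epsilon\fg$-directions; that cancellation is real (it is precisely the unimodularity of a shifted tangent bundle), but you leave it as a sketch, and it is the only place where the higher brackets $\ell_n$ actually enter your argument. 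The paper instead notes that, under the identification of $\sO(T[-1]T^\ast(X,\fg))$ with forms on $T^\ast(X,\fg)$, one has $\Delta_0 = [d_{dR}, i_\Pi]$ with $\Pi$ the canonical Poisson bivector; since $[d,d_{dR}]=0$ and $\Pi$ is $d$-invariant, both $\Delta_0^2=0$ and $[d,\Delta_0]=0$ become formal consequences of this Cartan-type identity, with no trace computation. So your route is viable and more ``physical'' (classical master equation plus unimodularity), but it front-loads exactly the computation that the tensorial formulation makes automatic: if you keep your route, the supertrace cancellation must actually be written out. You should also note that your description of $\mathrm{dVol}_0$ via the translation-invariant form in the nice local model presupposes local triviality, whereas contraction with $\widetilde{K}$ is defined globally for an arbitrary $\L8$ space with no niceness hypothesis.
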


\begin{proof}
Let $K \in \fg \otimes \fg^\vee$ be the inverse to the canonical pairing.  Define the skew-symmetric tensor
\[
\widetilde{K} \overset{def}{=} (\epsilon \otimes 1 + 1 \otimes \epsilon)K \in \left (\fg [\epsilon] \oplus \fg^\vee [\epsilon][-2] \right )^{\otimes 2} .
\]
Define $\Delta_0$ to be contraction with $\widetilde{K}$.
\end{proof}

Recall that 
\[
C^\ast (\fg[\epsilon] \oplus \fg^\vee [\epsilon][-2] ) = \csym \left ( \fg [\epsilon] [1] \oplus \fg^\vee \epsilon [-1] \right )^\vee
\]
so $\Delta_0$ is the unique differential operator which vanishes on constant and linear tensors and is contraction with $\widetilde{K}$ on quadratic tensors.  $\Delta_0$ can also be defined as
\[
\Delta_0 = [ d_{dR} , i_\Pi] , 
\]
where $\Pi$ is the Poisson tensor on $T^\ast (X, \fg)$ and after identifying $\sO (T[-1]T^\ast (X, \fg))$ with forms on $T^\ast (X, \fg)$, $d_{dR}$ denotes the de Rham differential (which lowers degree by -1).

\section{$\hat{A} (B \fg)$ as a volume form}\label{sect:Avol}

Let $B \fg = (X, \fg)$ be an  $\L8$ space, then we can consider the one dimensional Chern--Simons theory with space of fields 
\[
\widehat{\cL_{dR} T^\ast B \fg} = (X, \Omega^\ast_{S^1} \otimes (\fg \oplus \fg^\vee [-2])) \cong \cL_{\cB} T^\ast B \fg \cong T[-1] T^\ast B \fg \cong T^\ast[-1] T[-1] B \fg .
\]
We saw in Section \ref{sect:1dcsquant} that we can quantize this theory. Therefore, we obtain a projective volume form on 
\[
\widehat{\cL_{dR} B \fg}\cong T[-1] (X, \fg).
\]

\begin{definition}
Let $(X, \fg)$ be an $\L8$ space, then we define $dVol_{S^1}$ to be the projective volume form determined by the quantization of one dimensional Chern--Simons theory with space of fields $T[-1] T^\ast (X, \fg)$.  We let $\Delta_{S^1}$ denote the corresponding BV laplacian.
\end{definition}

It follows from Theorem \ref{LieThm} that we have
\[ \Delta_{S^1} = \Delta_0 + \{ \log (\hat{A} (B \fg)), - \} .\]

\begin{lemma}\label{lem:gaugetransform}
Let $\Delta_\omega = \Delta_0 +\{S_\omega, - \}$ with $S_\omega \in \sO(T[-1] (X, \fg))$ satisfying the QME. Then
the map given by multiplication by $e^{S_\omega /\hbar}$ 
\[
e^{S_\omega/\hbar} : \mathrm{Div}^\ast (\omega_0) \to \mathrm{Div}^\ast (\omega)
\]
is a cochain isomorphism.
\end{lemma}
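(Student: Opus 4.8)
The plan is to present the map as multiplication by an invertible element and reduce the whole assertion to a single conjugation identity. Since $e^{S_\omega/\hbar}$ is invertible with inverse $e^{-S_\omega/\hbar}$, multiplication by it is automatically a bijection of the underlying graded modules, so the only content is that it commutes with the differentials, i.e. that
\[
(d + \hbar\Delta_\omega)\circ e^{S_\omega/\hbar} = e^{S_\omega/\hbar}\circ(d + \hbar\Delta_0)
\]
as operators on $\sO(T^\ast[-1](X,\fg))((\hbar))$. Before anything else I would check that $e^{S_\omega/\hbar}$ is even well defined: because $S_\omega$ (such as $\log \hat{A}(B\fg)$) lies in the augmentation ideal $C^{>0}(\fg)$ and the algebra is filtered by symmetric powers with a compatible $\CC^\times$-weight grading in which $\hbar$ has weight $-1$, the series $e^{S_\omega/\hbar}=\sum_n \frac{1}{n!\hbar^n}S_\omega^n$ contributes only finitely many terms in each weight and hence defines a $\CC^\times$-equivariant, degree-zero operator.

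The essential tool is the conjugation formula for the second-order operator $\Delta_0$. I would exploit that $\Delta_0$ has order $\le 2$, satisfies $\Delta_0(1)=0$, and has associated bracket $\{-,-\}$ as its failure to be a derivation. Writing $m_u$ for multiplication by $u$, this forces $[\Delta_0,m_u]$ to be first order, $[[\Delta_0,m_u],m_u]$ to be multiplication by $\{u,u\}$, and all higher commutators to vanish. The Hadamard expansion $e^{-m_u}\Delta_0 e^{m_u}=\Delta_0+[\Delta_0,m_u]+\tfrac12[[\Delta_0,m_u],m_u]$ then gives, for $u=S_\omega/\hbar$,
\[
e^{-S_\omega/\hbar}\,\Delta_0\,(e^{S_\omega/\hbar}a)=\Delta_0 a+\tfrac{1}{\hbar}\{S_\omega,a\}+\tfrac{1}{\hbar}\Big(\Delta_0 S_\omega+\tfrac{1}{2\hbar}\{S_\omega,S_\omega\}\Big)a .
\]
Since $d$ is an ordinary first-order derivation, the analogous and easier calculation yields
\[
e^{-S_\omega/\hbar}\,d\,(e^{S_\omega/\hbar}a)=da+\tfrac{1}{\hbar}(dS_\omega)\,a .
\]

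Combining these, the conjugate of $d+\hbar\Delta_0$ acquires precisely the first-order term $\{S_\omega,-\}$, which reproduces the difference $\Delta_\omega-\Delta_0$ defining the divergence operator of $\omega$, together with a zeroth-order (multiplication) anomaly equal to $\tfrac{1}{\hbar}\big(dS_\omega+\hbar\Delta_0 S_\omega+\tfrac12\{S_\omega,S_\omega\}\big)$. This is exactly the left-hand side of the quantum master equation satisfied by $S_\omega$, so the anomaly vanishes and the conjugation identity holds; together with invertibility of $e^{\pm S_\omega/\hbar}$ this gives the claimed cochain isomorphism $\mathrm{Div}^\ast(\omega_0)\to\mathrm{Div}^\ast(\omega)$.

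I expect the real obstacle to be bookkeeping rather than anything conceptual. One must fix the signs in the odd bracket (which controls $\{S_\omega,S_\omega\}$) and, more importantly, track the powers of $\hbar$ consistently between the map $e^{S_\omega/\hbar}$, the precise normalization of the bracket term in $\Delta_\omega$, and the $\hbar$-weighting of the quantum master equation dictated by the $\CC^\times$-action. Arranging these conventions so that the first-order term produced by conjugation matches $\hbar\Delta_\omega-\hbar\Delta_0$ on the nose, that the anomaly is \emph{exactly} the QME, and that the map points in the stated direction (as opposed to its inverse, obtained by $S_\omega\mapsto -S_\omega$) is the delicate part; the algebra of the conjugation itself is routine once the order-two property of $\Delta_0$ is in hand.
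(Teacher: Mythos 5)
Your proof is correct and follows essentially the same route as the paper's: both reduce the statement to the conjugation identity $e^{-S_\omega/\hbar}(d+\hbar\Delta_0)e^{S_\omega/\hbar}=d+\hbar\Delta_\omega$, obtained from the order-two property of $\Delta_0$ (whose failure to be a derivation is the bracket) together with the quantum master equation killing the multiplication-operator anomaly $dS_\omega+\hbar\Delta_0 S_\omega+\tfrac12\{S_\omega,S_\omega\}$. The paper carries this out by directly expanding $\Delta_0(e^{S_\omega}I)$ rather than via the Hadamard commutator series, and is less explicit than you are about the powers of $\hbar$ and the well-definedness of $e^{S_\omega/\hbar}$, but the content is identical.
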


\begin{proof}
The lemma follows from the standard fact that
\[
e^{-S_\omega} \Delta_0 e^{S_\omega}= \Delta_\omega .
\]
Indeed, let $I \in \sO (T[-1](X, \fg))$, then we have
\[
\begin{array}{lll}
\Delta_0 (e^{S_\omega}  I) &=& \{ e^{S_\omega}, I\} - \Delta_0 (e^{S_\omega})  I +e^{S_\omega}  \Delta_0 (I) \\[2ex]
&=& e^{S_\omega} \{S_\omega , I\} - e^{S_\omega} \left (\Delta_0 (S_\omega) +1/2 \{S_\omega, S_\omega\} \right )  I + e^{S_\omega}  \Delta_0 (I) \\[2ex]
&=& e^{S_\omega} \{S_\omega, I\} + e^{S_\omega} \Delta_0 (I),
\end{array}
\]
where the last equality follows from $S_\omega$ satisfying the QME.

\end{proof}

We obtain as an immediate corollary of the lemma the following.

\begin{prop}\label{prop:int}
Let $(X, \fg)$ be an $\L8$ space and let $dVol_0$ be the canonical projective volume form on $T[-1] (X, \fg)$.  Further, let $dVol_{S^1}$ be the projective volume form determined by the one loop quantization of one dimensional Chern--Simons theory.  Then $dVol_{S^1}$ is integrable if and only if $dVol_0$ is integrable.
\end{prop}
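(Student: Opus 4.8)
The plan is to realize $dVol_{S^1}$ as a gauge transform of the canonical volume form $dVol_0$ and then transport integrability across the resulting isomorphism of divergence complexes. First I would record, as already noted after Theorem~\ref{LieThm}, that the BV Laplacian attached to $dVol_{S^1}$ is
\[
\Delta_{S^1} = \Delta_0 + \{\log(\hat{A}(B\fg)), -\},
\]
so that $dVol_{S^1}$ is precisely the volume form obtained from $dVol_0$ by the element $S := \log(\hat{A}(B\fg)) \in \sO(T[-1](X,\fg))$. Since $dVol_{S^1}$ genuinely arises from the one-loop quantization of one-dimensional Chern-Simons theory, the BD differential $d + \hbar \Delta_{S^1}$ squares to zero, whence $\Delta_{S^1}^2 = 0$; using that $\Delta_0$ is an order-two operator generating the Poisson bracket, a short computation gives $\Delta_{S^1}^2 = \{\Delta_0 S + \tfrac12 \{S,S\}, -\}$, so that $S$ satisfies the quantum master equation. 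I may therefore invoke Lemma~\ref{lem:gaugetransform} with $S_\omega = S$ to obtain a cochain isomorphism
\[
e^{S/\hbar} : \mathrm{Div}^\ast(dVol_0) \xrightarrow{\ \cong\ } \mathrm{Div}^\ast(dVol_{S^1}).
\]

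Next I would pass to cohomology sheaves. As multiplication by an element, the map $e^{S/\hbar}$ is $\CC((\hbar))$-linear, so it induces an isomorphism of the rank-one local systems of $\CC((\hbar))$-lines
\[
\cH^{-2d_1}(\mathrm{Div}^\ast(dVol_0)) \cong \cH^{-2d_1}(\mathrm{Div}^\ast(dVol_{S^1})),
\]
where $d_1 = \dim H^1(\fgr)$ as in the preceding lemmas.

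The step I expect to be the main obstacle is descending this isomorphism to the local systems of $\CC$-lines $\cD(dVol_0)$ and $\cD(dVol_{S^1})$, since $\cD(\omega)$ is defined by taking $\CC^\times$-invariants and the map $e^{S/\hbar}$ is \emph{not} $\CC^\times$-equivariant: the class $S$ has $\CC^\times$-weight $0$ (so that $\{S,-\}$ matches the weight $+1$ of $\Delta_0$), whence $S/\hbar$ has weight $+1$ and $e^{S/\hbar}$ mixes weights. Rather than appeal to equivariance directly, I would use the identification $\cD(\omega)((\hbar)) \cong \cH^{-2d_1}(\mathrm{Div}^\ast(\omega))$ established earlier in this subsection. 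The displayed isomorphism then reads $\cD(dVol_0)((\hbar)) \cong \cD(dVol_{S^1})((\hbar))$ as rank-one local systems of $\CC((\hbar))$-lines. Since such a local system is determined by its monodromy character and the inclusion $\CC^\times \hookrightarrow \CC((\hbar))^\times$ is injective, extension of scalars along $\CC \to \CC((\hbar))$ reflects isomorphisms of rank-one $\CC$-local systems; hence $\cD(dVol_0) \cong \cD(dVol_{S^1})$ as local systems of $\CC$-lines on $X$.

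Finally, since $\cD(dVol_0)$ and $\cD(dVol_{S^1})$ are isomorphic, one is isomorphic to the orientation local system $\mathrm{or}_X$ if and only if the other is. By the definition of integrability this says exactly that $dVol_{S^1}$ is integrable if and only if $dVol_0$ is, which completes the argument.
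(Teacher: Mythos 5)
Your proof is correct and takes essentially the same route as the paper, which obtains the proposition as an immediate corollary of Lemma \ref{lem:gaugetransform} together with the identification $\Delta_{S^1} = \Delta_0 + \{\log(\hat{A}(B\fg)),-\}$ from Theorem \ref{LieThm}. Your extra care in verifying the quantum master equation for $\log(\hat{A}(B\fg))$ and in descending the isomorphism of rank-one $\CC((\hbar))$-local systems to the $\CC^\times$-invariant $\CC$-lines $\cD(\omega)$ via monodromy characters fills in steps the paper leaves implicit.
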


So far we have not found a general criterion for the integrability of $dVol_0$ and hence $dVol_{S^1}$.  If our $\L8$ space is $(X , \fg_{X_{\overline{\partial}}})$ for a complex manifold $X$, then $dVol_0$ is integrable.  For $X$ smooth, the canonical volume form on $(X, \fg_X)$ is also integrable, but the integration map is subtle, as we discuss below.

\section{Integration}\label{sect:int}

Suppose that $(X, \fg)$ is a nice $\L8$ space and that $\omega$ is an integrable projective volume form on $T[-1] (X, \fg)$.  By definition the projection map of the shifted cotangent bundle
\[
\fg[\epsilon] \oplus (\fg[\epsilon])^\vee [-3] \to \fg[\epsilon]
\]
is a map of $\L8$ algebras and hence induces a natural pull back map on functions
\[
\sO(T[-1](X, \fg)) \to \sO (T^\ast[-1] T[-1] (X, \fg))
\]
which leads to a map of sheaves
\[
\sO (T[-1] (X, \fg)) \to \mathrm{Div}^\ast (\omega).
\]

We pass to compactly supported cohomology to obtain a map
\[
H^i_c (X, \sO(T[-1](X, \fg))) \to H^i_c (X, \mathrm{Div}^\ast (\omega))
\]
and taking $\CC^\times$ invariants gives
\[
H^i_c (X, \sO(T[-1] (X, \fg))) \to H^i_c (X, (\mathrm{Div}^\ast (\omega))^{\CC^\times}) \cong H^{i+2d_1}_c (X, \cD (\omega)).
\]
Now as $\cD(\omega)$ is isomorphic to the orientation local system on $X$, so the right hand side is zero unless $i +2d_1$ is the dimension of $X$ and in this dimension it is one-dimensional.

\begin{definition}
The integral associated to an integrable volume form $\omega$ on $T[-1](X, \fg)$, with $\dim X =n$, is the map
\[
H^{n-2d_1}_c (X, \sO (T[-1](X, \fg))) \to H^n_c (X, \cD(\omega)) \cong \CC.
\]
\end{definition}

\subsection{Integration on $T[-1](X, \fg)$}

We begin by recalling Costello's work with complex manifolds as the theorems have a particularly nice form in this setting.  To begin, Costello shows that for $(X, \fg_{X_{\overline{\partial}}})$ the $\L8$ space encoding the complex structure of $X$ (real dimension of $X$ is $2n$), then
\[
H^0  (X, \sO  (T[-1] (X, \fg_{X_{\overline{\partial}}})  )  ) = \oplus H^i (X, \Omega^n_{X, hol}).
\]

\begin{theorem}[Theorem 8.0.3 of \cite{CosWG}]
The projective volume form $dVol_0$ on $T[-1](X, \fg_{X_{\overline{\partial}}})$ is integrable and the associated integral
\[
\int : H^0  (X, \sO = (T[-1] (X, \fg_{X_{\overline{\partial}}})  )  ) \to \CC
\]
is (up to a scalar) the usual integration on $H^n(X, \Omega^n_{X, hol})$ and vanishes on $H^i(X, \Omega^{i}_{X, hol})$ for $i<n$.
\end{theorem}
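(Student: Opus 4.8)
The plan is to follow Costello's strategy in two movements: first establish integrability of $dVol_0$, and then identify the resulting integration map explicitly with honest Dolbeault integration of the top diagonal Hodge class. Throughout write $n = \dim_\CC X$, so that $X$ has real dimension $2n$.

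First I would record the local structure. By Proposition \ref{smoothnice}, $T[-1](X,\fg_{X_{\overline\partial}})$ is a nice $\L8$ space, and the reduced algebra of $\fg_{X_{\overline\partial}}$ has cohomology concentrated in degree $1$ of rank $n$; hence in the notation of the divergence-complex lemma for $T[-1]$ we have $d_1 = n$, and the only nonvanishing cohomology sheaf $\cH^i(\mathrm{Div}^\ast(dVol_0))$ sits in degree $-2n$, where it is a rank-one local system of $\CC((\hbar))$-lines. Taking $\CC^\times$-invariants yields the local system $\cD(dVol_0)$ of $\CC$-lines, and integrability is exactly the assertion that $\cD(dVol_0)$ is isomorphic to the orientation local system $\mathrm{or}_X$. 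Since a complex manifold is canonically oriented, $\mathrm{or}_X \cong \underline{\CC}$, so it suffices to prove that $\cD(dVol_0)$ is trivial.

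For triviality I would argue locally as in the proof of the divergence-complex lemma. In a holomorphic chart the generator of $\cH^{-2n}$ is represented by the translation-invariant volume element on the local model $V \oplus V^\vee[1]$, where $V$ collects the tangent directions and $V^\vee$ the cotangent directions brought in by the identification $T^\ast[-1]T[-1] \cong T[-1]T^\ast$. Because $dVol_0$ is contraction with the globally defined tensor $\widetilde{K}$ built from the canonical pairing, a change of holomorphic chart acts on the tangent block by the holomorphic Jacobian $J$ and on the dual block by $(J^{-1})^{\mathsf T}$; the induced action on the top exterior power is therefore $\det(J)\det(J)^{-1} = 1$. This is precisely the mechanism by which a cotangent bundle carries a canonical volume. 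Hence the transition functions of $\cD(dVol_0)$ are trivial, $\cD(dVol_0) \cong \underline{\CC} \cong \mathrm{or}_X$, and $dVol_0$ is integrable.

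For the second movement I would compute the integration map. Expanding $\sO(T[-1]B\fg_{X_{\overline\partial}}) = \Omega^{-\ast}_{B\fg_{X_{\overline\partial}}} = \bigoplus_k \Omega^k_{B\fg}[k]$ and using that $\Omega^k_{B\fg_{X_{\overline\partial}}}$ is a Dolbeault resolution of $\Omega^k_{X,\mathrm{hol}}$, the shift places $H^j(X,\Omega^k_{\mathrm{hol}})$ in total degree $j-k$, so that $H^0$ is the diagonal Hodge cohomology $\bigoplus_i H^i(X,\Omega^i_{X,\mathrm{hol}})$, whose top summand is $H^n(X,\Omega^n_{X,\mathrm{hol}}) = H^{n,n}_{\bar\partial}(X)$. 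The integral is the composite $H^0_c(X,\sO(T[-1]B\fg)) \to H^{2n}_c(X,\cD(dVol_0)) \cong \CC$ obtained by pulling back along the projection $\fg[\epsilon] \oplus (\fg[\epsilon])^\vee[-3] \to \fg[\epsilon]$, passing to $\CC^\times$-invariants, and applying the rank-one identification from integrability. Tracking the cohomological degree together with the $\CC^\times$-weight that the invariants functor attaches to each power of $\hbar$ shows that only the top summand $H^{n,n}_{\bar\partial}(X)$ can map nontrivially into $H^{2n}_c(X,\underline{\CC})$, while each lower piece $H^i(X,\Omega^i_{\mathrm{hol}})$ with $i<n$ lands in the wrong degree or weight and hence maps to zero.

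It remains to match the surviving map $H^{n,n}_{\bar\partial}(X) \to \CC$ with $\int_X$ up to a nonzero scalar, and this is the step I expect to be the main obstacle. Unwinding the contraction-with-$dVol_0$ isomorphism of the divergence-complex lemma identifies $\mathrm{Div}^\ast(dVol_0)$, in top degree, with a resolution of the trivialized determinant line, under which the abstract isomorphism $H^{2n}_c(X,\cD(dVol_0)) \cong \CC$ becomes evaluation against the fundamental class. Verifying that the pullback-then-invariants map carries a representative of $H^{n,n}_{\bar\partial}(X)$ to its genuine Dolbeault integral — and pinning down the scalar — requires threading the \v{C}ech/hypercohomology gluing of the local generators through all of these identifications, which is where the real bookkeeping lies; the vanishing on the lower diagonal summands, by contrast, is essentially formal once the degree and weight grading is in hand.
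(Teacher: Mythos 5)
The paper does not prove this statement: it is quoted verbatim as Theorem 8.0.3 of \cite{Cos3} and the author defers entirely to Costello, so there is no in-paper argument to measure you against. Judged on its own terms, your outline captures the right two-step strategy (integrability of $dVol_0$, then identification of the induced pairing), but it has genuine gaps at both decisive points. For integrability, the Jacobian cancellation $\det(J)\det(J)^{-1}=1$ only shows that the determinant line of the \emph{local model} $V\oplus V^\vee[1]$ is canonically trivial; it does not yet show that the specific generator of $\cH^{-2n}(\mathrm{Div}^\ast(dVol_0))$ singled out by $\Delta_0$ glues trivially. Locally $\Delta_0$ agrees with a coordinate divergence operator only up to a term $\{S,-\}$, and the local system $\cD(dVol_0)$ is twisted by the resulting \v{C}ech cocycle $e^{S_{ij}}$; you must show this cocycle is a coboundary. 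The cleaner route (the one this paper uses for the analogous smooth-case proposition) is to identify $\mathrm{Div}^\ast(dVol_0)$ globally with a shifted Dolbeault-type complex, e.g.\ $\Omega^{-\ast}_{hol}(T^\ast X)((\hbar))$, and read off the top cohomology sheaf directly.

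The second gap is more serious. You assert that the components $H^i(X,\Omega^i_{hol})\to\CC$ with $i<n$ ``land in the wrong degree or weight,'' but a naive degree count actually points the wrong way: a map of complexes of sheaves $\bigoplus_k\Omega^k_{hol}[k]\to\underline{\CC}[2n]$ would have to vanish identically for degree reasons, since $\Omega^k_{hol}=0$ for $k>n$. The map is only nonzero because it factors through the full resolution $\mathrm{Div}^\ast(dVol_0)$ and a zig-zag in the hypercohomology spectral sequence, so the vanishing for $i<n$ is not formal; it requires the $\CC^\times$-equivariance argument (each diagonal piece contributes with a definite $\hbar$-weight, and only $i=n$ survives passage to invariants), which you name but do not carry out. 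Together with your own admission that the identification of the surviving map with $\int_X$ is unverified, the proof of the actual content of the theorem --- which summand survives and what the map is --- remains open in your write-up.
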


From Theorem \ref{thm:QCS} we know that one dimensional Chern--Simons defines a projective volume form on $T[-1] (X, \fg_{X_{\overline{\partial}}})$ which we will denote $\overline{dVol}_{S^1}$.  That this volume form is integrable follows from Proposition \ref{prop:int}.  We then have the following.

\begin{theorem}
The quantization of one dimensional Chern--Simons theory with target a complex manifold $X$ determines an integrable projective volume form $\overline{dVol}_{S^1}$ on $T[-1](X, \fg_{X_{\overline{\partial}}})$ with associated integral
\[
\int_{T[-1](X, \fg_{X_{\overline{\partial}}})} \alpha \; \overline{dVol}_{S^1} = k \int_{X} \alpha \cdot e^{-c_1 (X)/2} Td(X) ,
\]
for $k$ a nonzero constant and where $\int_X$ indicates the usual integration of holomorphic forms.
\end{theorem}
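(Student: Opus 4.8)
The plan is to exhibit $\overline{dVol}_{S^1}$ as a gauge transform of the canonical volume form $dVol_0$ and then transport Costello's integration map (Theorem 8.0.3 of \cite{Cos3}) across this transform. First I would identify the BV Laplacian attached to $\overline{dVol}_{S^1}$. The identity $\Delta_{S^1} = \Delta_0 + \{\log(\hat{A}(B\fg)), -\}$ recorded in the previous section, combined with the Corollary to Theorem \ref{LieThm} which computes the relevant characteristic class for $\fg_{X_{\overline{\partial}}}$, shows that the Laplacian determined by one-dimensional Chern-Simons with target $(X, \fg_{X_{\overline{\partial}}})$ is
\[
\Delta_{S^1} = \Delta_0 + \{S_\omega, -\}, \qquad S_\omega := \log\left(e^{-c_1(X)/2}Td(X)\right).
\]
That $S_\omega$ solves the quantum master equation is precisely the assertion that the one-loop quantization of \cite{GG1} defines a BV theory, so $\Delta_{S^1}$ squares to zero and $\overline{dVol}_{S^1}$ is a bona fide projective volume form.

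Second I would apply Lemma \ref{lem:gaugetransform} to this $S_\omega$. Its proof exhibits multiplication by $e^{S_\omega}$ as a cochain map intertwining $\Delta_{S^1}$ and $\Delta_0$, hence a cochain isomorphism between the divergence complexes $\mathrm{Div}^\ast(\overline{dVol}_{S^1})$ and $\mathrm{Div}^\ast(dVol_0)$. Integrability of $\overline{dVol}_{S^1}$ is then automatic: the canonical form $dVol_0$ is integrable on $T[-1](X, \fg_{X_{\overline{\partial}}})$ by Theorem 8.0.3 of \cite{Cos3}, and the Proposition preceding this section shows $\overline{dVol}_{S^1}$ is integrable if and only if $dVol_0$ is. Thus $\cD(\overline{dVol}_{S^1})$ is identified with the orientation local system of $X$ and the integration map is defined.

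Third, and this is the crux, I would compare the two integration maps. For $\alpha \in \sO(T[-1](X,\fg_{X_{\overline{\partial}}}))$, the integral $\int \alpha\, \overline{dVol}_{S^1}$ is the image of the class $[\alpha]$ in the one-dimensional top compactly supported cohomology of $\mathrm{Div}^\ast(\overline{dVol}_{S^1})$. Transporting $[\alpha]$ along the cochain isomorphism of the previous step sends it to $[e^{S_\omega}\alpha]$ in $\mathrm{Div}^\ast(dVol_0)$, so that
\[
\int_{T[-1](X, \fg_{X_{\overline{\partial}}})} \alpha\; \overline{dVol}_{S^1} = \int_{T[-1](X, \fg_{X_{\overline{\partial}}})} e^{S_\omega}\,\alpha\; dVol_0 .
\]
Since $e^{S_\omega} = e^{-c_1(X)/2}Td(X)$, this is the desired formula once the right-hand integral is interpreted against $dVol_0$.

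The hard part will be the $\hbar$- and weight-bookkeeping implicit in the third step. Lemma \ref{lem:gaugetransform} phrases the gauge transform with the factor $e^{S_\omega/\hbar}$, whereas the asserted formula carries the $\hbar$-free factor $e^{S_\omega}$; reconciling these requires the $\CC^\times$-equivariance of the quantization. One uses that $\Delta_0$ (hence $\Delta_{S^1}$) has weight one while $\hbar$ has weight $-1$, and that the degree-$2k$ component of $S_\omega$, built from $ch_{2k}$, carries exactly the weight needed so that, upon restricting to the $\CC^\times$-invariant subcomplex $(\mathrm{Div}^\ast)^{\CC^\times}$ used to define the integral, the powers of $\hbar$ cancel and only the product $\alpha\cdot e^{-c_1(X)/2}Td(X)$ contributes to the top cohomology. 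With this normalization settled, the identification of the $dVol_0$-integral with ordinary integration over $X$ is supplied directly by Theorem 8.0.3 of \cite{Cos3}.
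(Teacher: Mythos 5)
Your proposal is correct and follows essentially the same route as the paper: integrability is transported from $dVol_0$ (via Costello's Theorem 8.0.3 and the equivalence of integrability for $dVol_0$ and $dVol_{S^1}$), and the integral formula comes from the gauge-transform cochain isomorphism $e^{S_\omega/\hbar}$ of Lemma \ref{lem:gaugetransform} applied with $S_\omega = \log(e^{-c_1(X)/2}Td(X)) = \log\hat{A}(B\fg_{X_{\overline{\partial}}})$, exactly as in the lemma the paper uses to deduce Theorem \ref{thm:thm35}. Your attention to the $\hbar$-versus-$\hbar$-free discrepancy in the exponential, resolved by passing to $\CC^\times$-invariants, is in fact more explicit than the paper's own commutative-diagram argument.
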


For a general $\L8$ space we also obtain a volume form $dVol_{S^1}$ on $T[-1](X, \fg)$ from the quantization of one dimensional Chern--Simons theory (this is the one from the previous section). However, as we noted above we do not know a general integrability criterion.  In the case that $dVol_0$ and $dVol_{S^1}$ are both integrable we have the following.

\begin{theorem}\label{thm:thm35}
Let $(X, \fg)$ be an $\L8$ space such that $dVol_0$ is integrable, then 
\[
\int_{T[-1] (X, \fg)} \alpha \;  dVol_{S^1} = \int_{T[-1] (X, \fg)} \alpha \cdot \hat{A} (B \fg) dVol_0 .
\]
\end{theorem}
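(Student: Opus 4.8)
The plan is to realize $dVol_{S^1}$ as a gauge transform of the canonical volume form $dVol_0$ by the class $\log(\hat{A}(B\fg))$, and then to transport the defining integration map along the resulting cochain isomorphism. The starting point is the identity $\Delta_{S^1} = \Delta_0 + \{\log(\hat{A}(B\fg)),-\}$ recorded just after Theorem \ref{LieThm}, which exhibits the BV Laplacian of $dVol_{S^1}$ as a deformation of $\Delta_0$ by the Poisson action of $S := \log(\hat{A}(B\fg)) \in \sO(T[-1](X,\fg))$. Since $dVol_{S^1}$ is a bona fide projective volume form coming from the one-loop quantization of one-dimensional Chern-Simons theory, the correspondence of Proposition \ref{prop:projvol} guarantees that $\Delta_{S^1}$ is a genuine $\CC^\times$-equivariant BV operator; unwinding $\Delta_{S^1}^2 = 0$ through the $BD$ relation for $\Delta_0$ then shows precisely that $S$ solves the quantum master equation, so $S$ satisfies the hypothesis of Lemma \ref{lem:gaugetransform}.

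First I would invoke Lemma \ref{lem:gaugetransform} for $S = \log(\hat{A}(B\fg))$ to obtain a cochain isomorphism given by multiplication by $e^{S/\hbar}$ between the divergence complexes $\mathrm{Div}^\ast(dVol_0)$ and $\mathrm{Div}^\ast(dVol_{S^1})$. By the proposition immediately preceding this theorem, integrability of $dVol_0$ forces $dVol_{S^1}$ to be integrable as well, so both volume forms define integration maps via the rank-one local systems $\cD(dVol_0)$ and $\cD(dVol_{S^1})$ in cohomological degree $-2d_1$. The gauge isomorphism induces an isomorphism $\cD(dVol_0) \cong \cD(dVol_{S^1})$. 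Because $\hat{A}(B\fg) = 1 + (\text{terms of strictly positive form degree})$, the induced map on the lowest-weight generators is the identity, so this isomorphism is compatible with the chosen identifications of each $\cD$ with the orientation local system of $X$; hence the two integration maps to $\CC$ agree under the transport, up to the common normalization.

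With the integration maps identified, the theorem reduces to a diagram chase. For $\alpha \in \sO(T[-1](X,\fg))$, pulling back along the projection $p\colon T^\ast[-1]T[-1](X,\fg) \to T[-1](X,\fg)$ and transporting the class of $p^\ast\alpha$ from $\mathrm{Div}^\ast(dVol_{S^1})$ to $\mathrm{Div}^\ast(dVol_0)$ along $e^{S/\hbar}$ replaces $p^\ast\alpha$ by $p^\ast\alpha \cdot e^{S/\hbar}$. The final step is to verify that, on the $\CC^\times$-invariant cohomology that computes the integral, this formal factor descends to honest multiplication by $e^{S} = \hat{A}(B\fg)$. This is the rigorous form of the heuristic $dVol_{S^1} = e^{\log(\hat{A}(B\fg))}\,dVol_0 = \hat{A}(B\fg)\,dVol_0$, exactly parallel to the smooth-manifold relation $\mathrm{Div}_{e^f\omega} = \mathrm{Div}_\omega + \{f,-\}$ recalled earlier. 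Granting this, $\int_{T[-1](X,\fg)}\alpha\,dVol_{S^1} = \int_{T[-1](X,\fg)}\alpha\cdot\hat{A}(B\fg)\,dVol_0$, as claimed.

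The main obstacle is precisely this last bookkeeping step: one must track how the gauge parameter $e^{S/\hbar}$, which is not $\CC^\times$-homogeneous, interacts with the $\hbar$-normalization of the generator of $\cD(\omega)$ (where $\hbar$ carries weight $-1$), so that passage to $\CC^\times$-invariants collapses $e^{S/\hbar}$ to multiplication by $\hat{A}(B\fg)$ and not by some other power of the exponential. Controlling these weights, together with confirming that the orientation identification is genuinely preserved rather than merely rescaled, is where the real work lies; by comparison the cochain isomorphism of Lemma \ref{lem:gaugetransform} and the integrability transfer are formal.
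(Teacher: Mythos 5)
Your proposal is correct and follows essentially the same route as the paper: the paper reduces the theorem to an auxiliary lemma stating $\int \alpha\,\omega = \int \alpha\, e^{S_\omega}\omega_0$ whenever $\Delta_\omega = \Delta_0 + \{S_\omega,-\}$, proved exactly by invoking Lemma \ref{lem:gaugetransform} to get the cochain isomorphism $e^{S_\omega/\hbar}$ between divergence complexes and reading off the resulting commutative diagram of integration maps. Your additional care about verifying the quantum master equation for $\log(\hat{A}(B\fg))$ and about how the non-homogeneous factor $e^{S/\hbar}$ collapses to $e^{S}$ after passing to $\CC^\times$-invariants addresses details the paper leaves implicit, but does not change the argument.
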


The two preceding theorems follow from a simple lemma.

\begin{lemma}
Let $\omega_0$ and $\omega$ be integrable volume forms on an $\L8$ space $(X, \fg)$ such that 
for the corresponding BV laplacians we have
\[
\Delta_\omega = \Delta_0 + \{ S_\omega , -\}.
\]
Then for $\alpha \in H^0 (X, \sO (X, \fg))$ we have
\[
\int_{(X, \fg)} \alpha \omega = \int_{(X, \fg)} \alpha e^{S_\omega} \omega_0 .
\]
\end{lemma}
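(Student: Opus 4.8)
The plan is to reduce the stated identity to a compatibility between the two integration maps attached to $\omega$ and to $\omega_0$, and then to realize that compatibility through the gauge transformation of Lemma \ref{lem:gaugetransform}. First I would recall how the integral is built: $\int_{(X,\fg)} \alpha\,\omega$ is the image of $\alpha$ under the composite
\[
\sO(X, \fg) \xrightarrow{\ p^\ast\ } \mathrm{Div}^\ast(\omega) \longrightarrow H^\bullet_c\big(X, \mathrm{Div}^\ast(\omega)\big)^{\CC^\times} \cong H^\bullet_c(X, \cD(\omega)) \cong \CC,
\]
where $p$ is the projection off the shifted cotangent fiber, the middle step passes to compactly supported cohomology and takes $\CC^\times$-invariants, and the last identification uses integrability of $\omega$, i.e. $\cD(\omega) \cong \mathrm{or}_X$. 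The identical recipe with $\omega$ replaced by $\omega_0$ computes $\int_{(X,\fg)}(-)\,\omega_0$. Thus the whole statement comes down to comparing the divergence complexes $\mathrm{Div}^\ast(\omega)$ and $\mathrm{Div}^\ast(\omega_0)$ together with their identifications of the top cohomology with $\mathrm{or}_X$.

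The key tool is the operator identity $e^{-S_\omega}\Delta_0 e^{S_\omega} = \Delta_\omega$ established in the proof of Lemma \ref{lem:gaugetransform}: it shows that multiplication by $e^{S_\omega}$ is a cochain isomorphism $\mathrm{Div}^\ast(\omega) \to \mathrm{Div}^\ast(\omega_0)$. I would then use that $p^\ast$ is an algebra homomorphism, so that multiplication by the function $e^{S_\omega}$ carries the pulled-back class to
\[
e^{S_\omega}\, p^\ast \alpha = p^\ast\big(\alpha\, e^{S_\omega}\big) \in \mathrm{Div}^\ast(\omega_0).
\]
In other words, the representative $p^\ast\alpha$ used to compute $\int_{(X,\fg)}\alpha\,\omega$ in the $\omega$-complex is sent, under the gauge isomorphism, to exactly the representative $p^\ast(\alpha\, e^{S_\omega})$ used to compute $\int_{(X,\fg)}\alpha\, e^{S_\omega}\,\omega_0$ in the $\omega_0$-complex. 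Granting that the isomorphism respects the integration maps, this yields the chain
\[
\int_{(X,\fg)}\alpha\,\omega = \int_\omega p^\ast\alpha = \int_{\omega_0} p^\ast\big(\alpha\, e^{S_\omega}\big) = \int_{(X,\fg)} \alpha\, e^{S_\omega}\,\omega_0.
\]

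The step I expect to be the main obstacle is verifying that the gauge isomorphism really does intertwine the two integration maps on the nose, rather than up to an a priori scalar. Concretely, one must check that $e^{S_\omega}$ is $\CC^\times$-equivariant and hence descends to an isomorphism of the rank-one local systems $\cD(\omega) \cong \cD(\omega_0)$, and that this induced isomorphism is compatible with the two integrability identifications $\cD(\omega) \cong \mathrm{or}_X \cong \cD(\omega_0)$, so that the boundary identifications with $\CC$ agree. This is also where the $\hbar$-bookkeeping lives: on the chain level the isomorphism of Lemma \ref{lem:gaugetransform} is recorded as $e^{S_\omega/\hbar}$, and it is precisely the passage to $\CC^\times$-invariants — where $\hbar$ carries weight $-1$ and is thereby normalized — that turns the weight-zero factor $e^{S_\omega/\hbar}$ into multiplication by $e^{S_\omega}$ on the relevant representatives. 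Once these compatibilities are in hand the argument closes, and I would expect the remaining manipulations (that $p^\ast$ commutes with the cohomology maps and that $e^{S_\omega}$ preserves compact supports) to be routine.
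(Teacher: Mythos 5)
Your argument is essentially identical to the paper's proof, which simply invokes Lemma \ref{lem:gaugetransform} and records the resulting commutative triangle relating $\int\omega_0$, $\int\omega$, and multiplication by $e^{S_\omega/\hbar}$ on the divergence complexes. You spell out the compatibilities (equivariance, the identification with the orientation local system, the $\hbar$-bookkeeping) that the paper leaves implicit, but the route is the same.
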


\begin{proof}
We know from Lemma \ref{lem:gaugetransform} that multiplication by $e^{S_\omega/\hbar}$ is a cochain isomorphism between divergence complexes, hence we have a commutative diagram
\[
\xymatrix{ & H^i_c (X, \mathrm{Div}^\ast (\omega_0)^{\CC^\times}) \ar[2,0]_{\cong}^{e^{S_\omega/\hbar}} \\
H^i_c (X, \sO(X, \fg)) \ar[-1,1]^{\int \omega_0} \ar[1,1]_{\int \omega} \\
& H^i_c (X, \mathrm{Div}^\ast (\omega)^{\CC^\times} )}
\]

\end{proof}

\subsection{The Smooth Case}\label{sect:smooth}

In this final section we discuss integration on $T[-1] (X,\fg_X)$ for $X$ a smooth, compact, oriented manifold.  We will see that $dVol_0$ and hence $dVol_{S^1}$ are integrable, but the associated integrals are trivial.  We then take homotopy invariants with respect to the action of $B\GG_a$ in order to obtain a non trivial integration map. 

\begin{prop}
The canonical volume form $dVol_0$ on $T[-1](X, \fg_X)$ is integrable.
\end{prop}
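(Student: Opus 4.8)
The plan is to compute the local system $\cD(dVol_0) = \cH^{-2d_1}(\mathrm{Div}^\ast(dVol_0))^{\CC^\times}$ explicitly and to identify it with the orientation local system $\mathrm{or}_X$. By the lemma above (for projective volume forms on $T[-1](X,\fg)$ with $(X,\fg)$ nice), applied with $d_1 = \dim H^1((\fg_X)_{\mathrm{red}}) = \dim X = n$, we already know that $\cH^i(\mathrm{Div}^\ast(dVol_0))$ vanishes except in degree $-2d_1$, where $\cD(dVol_0)$ is a rank one local system of $\CC$-lines on $X$; what remains is only to pin down its monodromy. The key structural point is that $dVol_0$ and its BV Laplacian $\Delta_0$ are defined \emph{canonically}, as contraction with the tensor $\widetilde{K}$ built from the pairing, so the entire divergence complex is natural under the transition functions gluing $\fg_X$. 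Hence the monodromy of $\cD(dVol_0)$ is governed entirely by those transition functions.

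First I would identify the cohomology line with a Berezinian. Running the local computation from the proof of the lemma above, contraction with the translation-invariant volume form $dVol$ turns the divergence complex into the formal de Rham complex, whose cohomology (after inverting $\hbar$) is one-dimensional in degree $-2d_1$ and is spanned by the class of $dVol$ itself. Invariantly, this exhibits $\cD(dVol_0)$ as the Berezinian (graded determinant) line $\mathrm{Ber}((\fg_X)_{\mathrm{red}}[\epsilon])$ of the graded bundle underlying $T[-1](X,\fg_X)$, the generator being represented by $dVol$.

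Next I would evaluate this line. As a graded $C^\infty_X$-module, $(\fg_X)_{\mathrm{red}}[\epsilon] \cong T_X[-1]\oplus T_X[-2]$, the two summands being the $1$- and $\epsilon$-components. Since $\fg_X$ is built by Gelfand--Kazhdan formal geometry, its transition functions are formal diffeomorphisms acting $\CC[\epsilon]$-linearly; on the reduced data the relevant cocycle is the ordinary tangent-bundle cocycle $g \in GL(T_X)$, acting by the same $g$ on \emph{both} summands, while the higher jets act unipotently and so act trivially on a rank one determinant line. In the Berezinian the odd summand $T_X[-1]$ contributes $\det(g)^{-1}$ and the even summand $T_X[-2]$ contributes $\det(g)$, so the total transition character is $\det(g)^{-1}\det(g) = 1$ (and, being trivial, this is insensitive to the precise power of $\det$ one chooses). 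Hence $\cD(dVol_0)$ is the trivial local system on $X$.

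Finally, since $X$ is oriented the orientation local system $\mathrm{or}_X$ is itself trivial, so $\cD(dVol_0)\cong \mathrm{or}_X$ and $dVol_0$ is integrable; note that this computation shows orientability is exactly the hypothesis that makes integrability hold. I expect the main obstacle to be the bookkeeping in the middle step: rigorously identifying $\cD(dVol_0)$ with $\mathrm{Ber}((\fg_X)_{\mathrm{red}}[\epsilon])$ and tracking the degree and sign conventions so that the odd and even copies of $T_X$ \emph{cancel} in the Berezinian rather than reinforcing — equivalently, verifying directly that the generator $dVol$ is preserved up to the scalar $\det(g)^{-1}\det(g)=1$ under a change of formal coordinates, with the higher-order terms of the transition diffeomorphisms contributing nothing to the rank one cohomology line.
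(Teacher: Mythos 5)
Your argument is correct, but it takes a genuinely different route from the paper. The paper's proof is a direct identification of the whole divergence complex using the jet model for $\fg_X$: it identifies $\mathrm{Div}^\ast(dVol_0)$ with $\Omega^{-\ast}_{T^\ast B\fg_X}$, which is quasi-isomorphic to $dR(J(\Omega^{-\ast}_{T^\ast X}))((\hbar))$ and hence to $\Omega^{-\ast}_{T^\ast X}((\hbar))$, so the cohomology sheaf is visibly $\CC((\hbar))$ concentrated in degree $-2n$ with trivial monodromy, and orientedness of $X$ finishes the argument. You instead take the abstract vanishing lemma as given and compute the monodromy of the rank one line $\cD(dVol_0)$ by a cocycle argument: identify the line with a Berezinian of $(\fg_X)_{\mathrm{red}}[\epsilon]\cong T_X[-1]\oplus T_X[-2]$, note that the pro-unipotent (higher jet) part of the Gelfand--Kazhdan structure group acts trivially on a determinant line, and observe that the odd and even copies of $T_X$ contribute $\det(g)^{-1}$ and $\det(g)$ respectively, which cancel. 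Both proofs land on the trivial local system and invoke orientability identically. Your approach buys more: it isolates the structural reason for integrability (the cancellation between $H^1$ and $H^2$ of the reduced algebra of $T[-1](X,\fg)$, which are forced to be the same bundle in adjacent parities) and would adapt to any nice $(X,\fg)$, whereas the paper's proof leans on the specific quasi-isomorphism $C^\ast(\fg_X)\simeq dR(\sJ)$ and gets the answer with less sign bookkeeping. The one step you rightly flag as needing care --- rigorously identifying $\cD(dVol_0)$ with the Berezinian line and fixing the exponent conventions --- is a genuine piece of work not present in the paper's route, but since the two determinant factors cancel regardless of which overall power of $\mathrm{Ber}$ appears, the conclusion is insensitive to it, as you note.
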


\begin{proof}
We can identify the divergence complex $\mathrm{Div}^\ast (dVol_0)$ with $\Omega^{-\ast}_{T^\ast B \fg} ((\hbar))$ which is quasi-isomorpic to $dR(J( \Omega^{-\ast}_{T^\ast X}))((\hbar))$ which itself is quasi-isomorphic to $\Omega^{-\ast}_{T^\ast X}((\hbar))$.  Hence the cohomology sheaves of this complex are zero except in dimension $-2n$, where $\dim X =n$, in which they are $\CC((\hbar))$.  We see that $\cD(dVol_0)$ is the trivial local system and we assume $X$ is oriented and hence the proposition follows.
\end{proof}

Note that we know immediately that since $dVol_0$ is integrable, so is $dVol_{S^1}$.  

The integral associated to $dVol_0$ on $T[-1] (X, \fg_X)$ is a map
\[
\int : H^{-n} (\sO(T[-1](X, \fg_X))) \to H^n (X, \cD (dVol_0)) \cong \CC .
\]
But we have
\[
H^{-n} (\sO (T[-1](X, \fg_X))) =0.
\]
In order to have an interesting integral we must remember the action of $B\GG_a$, see Section 9.3 of \cite{GGCS}.  So far we have constructed a  degree 0 map of sheaves
\[
\int : \sO(T[-1] (X, \fg_X)) \to \CC[n] .
\]
We now pass to $B\GG_a$ (homotopy) invariants where we equip $\CC$ with the trivial $B\GG_a$ module structure.  Therefore, we obtain a map
\[
\int dVol_0^{B\GG_a} : (\sO(T[-1] (X, \fg_X)))^{B\GG_a} \simeq ( \Omega^{-\ast}_X[[u]], ud) \to \CC[[u]][n].
\]
As $dVol_{S^1}$ is also integrable, we obtain an equivariant integration map
\[
\int^{S^1} dVol_{S^1} : (\sO(T[-1] (X, \fg_X)))^{B\GG_a} \to \CC[[u]][n].
\]

\begin{theorem}
Let $(X, \fg_X)$ be the $L_\infty$ space determined by the (oriented and compact) smooth $n$-manifold $X$. Then,
\[
\int^{S^1} 1 \cdot dVol_{S^1} = u^n \hat{A} (X) \in \CC[[u]][n].
\]
\end{theorem}

\begin{proof}
Let $\left ( \mathrm{Div}^\ast (dVol_{S^1}) \right )^{S^1}$ be the equivariant divergence complex and $u$ a degree 2 parameter. Then we have a quasi-isomorphism
\[
\left ( \mathrm{Div}^\ast (dVol_{S^1} )\right )^{S^1} \simeq \left (\Omega^{-\ast}_{T^\ast X}[[u]]((\hbar)), u d_{T^\ast X} + \{ \log \hat{A}(X)_u , - \} \right ).
\]
(Recall the definition of $\log \hat{A}(X)_u$ from Appendix \ref{app:genera}.)
So $\int^{S^1} dVol_{S^1}$ is realized as the map at the level of cohomology induced by the map
\[
(\sO(T[-1] (X, \fg_X)))^{B\GG_a} \to \left ( \mathrm{Div}^\ast (dVol_{S^1} ) \right )^{S^1}
\]
and passing to $\CC^\times$-invariants.
The theorem then follows directly from Lemma \ref{lem:gaugetransform}.

\end{proof}

\appendix

\section{BV Theory a la Costello}\label{app:BV1}

In this appendix we recall a mathematical take on QFT in the Batalin--Vilkovisky formalism as described by Kevin Costello \cite{Cos1}.  Our presentation follows that of \cite{GLL} and \cite{SiVertex}.

\subsection{BV Algebras and Observables}\label{app:BV}

\begin{definition}\label{defn-BV} A BV algebra is a pair $(\cA, \Delta)$ where
\begin{itemize}
\item $\cA$ is a $\ZZ$-graded commutative associative unital algebra. 
\item $\Delta: \cA \to \cA$ is a second-order operator of degree $1$ such that $\Delta^2=0$. 
\end{itemize}
\end{definition}

Here $\Delta$ is called the BV operator. $\Delta$ being ``second-order" means the following: define the \emph{BV bracket} $\{-,-\}_\Delta$ as the measuring of the failure of $\Delta$ being a derivation
$$
    \{a,b\}_\Delta:=\Delta(ab)-(\Delta a)b- (-1)^{\lvert a \rvert}a \Delta b. 
$$
In this section we will suppress $\Delta$ from the notation, simply writing $\{-,-\}$. Then $\{-,-\}: \cA\otimes \cA\to \cA$ defines a Poisson bracket of degree $1$ satisfying
\begin{itemize}
\item $\{a,b\}=(-1)^{\lvert a \rvert \lvert b \rvert}\{b,a\}$. 
\item $\{a, bc\}=\{a,b\}c+(-1)^{(\lvert a \rvert+1)\lvert b \rvert}b\{a,c\}$. 
\item $\Delta\{a,b\}=-\{\Delta a, b\}-(-1)^{\lvert a \rvert}\{a, \Delta b\}$. 
\end{itemize}

\begin{definition} A differential BV algebra is a triple $(\cA, Q, \Delta)$ where
\begin{itemize}
\item $(\cA, \Delta)$ is a BV algebra (see Definition \ref{defn-BV}).
\item $Q: \cA\to \cA$ is a derivation of degree $1$ such that $Q^2=0$ and $[Q, \Delta]=0$. 
\end{itemize}

\end{definition}

\begin{definition}
Let $(\cA, Q, \Delta)$ be a differential BV algebra. A degree $0$ element $I_0\in \cA$ is said to satisfy the \emph{classical master equation} (CME) if 
$$
QI_0+\frac{1}{2} \{I_0,I_0\}=0.
$$ 
A degree $0$ element $I\in \cA[[\hbar]]$ is said to satisfy the \emph{quantum master equation} (QME) if 
$$
QI+\hbar \Delta I+\frac{1}{2}\{I,I\}=0.
$$
Here $\hbar$ is a formal (perturbative) parameter. 
\end{definition}

 The ``second-order" property of $\Delta$ implies that QME is equivalent to 
$$
   (Q+\hbar \Delta)e^{I/\hbar}=0. 
$$
If we decompose $I=\sum\limits_{g\geq 0}I_g\hbar^g$, then the $\hbar\to0$ limit of the QME recovers the CME: 
$
QI_0+\frac{1}{2}\{I_0, I_0\}=0.
$

A solution $I_0$ of the CME leads to a differential $Q+\{I_0,-\}$, which is usually called the BRST operator in physics. 

\begin{definition}
Let $(\cA, Q, \Delta)$ be a differential BV algebra and $I_0 \in \cA$ satisfy the CME.  Then the {\it complex of classical observables}, $Obs^{cl}$ is given by
\[
Obs^{cl} \overset{\text{def}}{=}( \cA, Q+\{I_0,-\}).
\]
\end{definition}

Similarly, a solution $I$ of the QME yields a differential and correspondingly a complex of quantum observables.

\begin{definition}
Let $(\cA, Q, \Delta)$ be a differential BV algebra and $I \in \cA[[\hbar]]$ satisfy the QME.  Then the {\it complex of quantum observables}, $Obs^{q}$ is given by
\[
Obs^{q} \overset{\text{def}}{=}( \cA[[\hbar]], Q+\hbar \Delta + \{I,-\}).
\]
\end{definition}

Note that $Obs^{cl}$ has a degree 1 Poisson bracket, so following \cite{CosGw2} we call it a $P_0$ algebra.  Similarly, in {\it ibid.} the structure on $Obs^q$ is called a BD algebra.

\subsection{Perturbative BV Quantization}\label{app:BV2}

The data of a classical field theory over a manifold $M$ consists of a graded vector bundle $E$ (possibly of infinite rank) equipped with a -1 symplectic pairing and a local functional $S \in \sO_{loc} (\cE)$\footnote{Note that $\sO (\cE) = \csym(\cE^\vee)$, the subspace $\sO_{loc} (\cE) \subset \sO(\cE)$ consists of those functionals determined by Lagrangian densities.} expressed as $S(e) = \langle e, Q(e) \rangle + I_0 (e)$, where $Q$ is a square zero differential operator of cohomological degree 1, such that
\begin{enumerate}
\item $S$ satisfies the CME, i.e., $\{S, S\} = 0$;
\item $I_0$ is at least cubic; and
\item $(\cE, Q)$ is an elliptic complex.
\end{enumerate}

\begin{definition}
A classical field theory $(\cE, S)$ over $M$ is a {\it cotangent theory} if we can write the field content as
\[
\cE = \Gamma \left (M ; E[1] \oplus \left (E^\vee \otimes \mathrm{Dens}(M) [-2] \right ) \right ).
\]
We further require that the action $S$ vanishes on tensors where there are at least two sections from the second summand $E^\vee \otimes \mathrm{Dens} (M)$.
\end{definition}

Quantization of a field theory $(\cE, S)$ over $M$ consists of two stages:
\begin{enumerate}
\item Build a BV algebra from the data of the pairing on the bundle $E$; and
\item Promote the classical action $S$ to a solution of the QME in this BV algebra.
\end{enumerate}

The first difficulty is that the Poisson kernel $K$ dual to the symplectic pairing is nearly always singular, so the naive definition of the BV operator $\Delta_k = \partial_k$ is ill-defined. In \cite{Cos1}, Costello uses homotopical ideas (built on the heat kernel) to build a family of well defined (smooth) BV operators $\Delta_L$ for $0<L<\infty$. Consequently, there is a family of differential BV algebras $\left \{ (\sO(E), Q , \Delta_L)\right \}_{L>0}$. Costello also describes {\it homotopy renormalization group flow} (HRG) to relate solutions of the QME between algebras in this family. As we describe in the next section,  HRG is expressed in terms of a propagator built from the differential operator $Q$ and a {\it gauge fixing operator} $Q^\dagger$; indeed, any parametrix for the generalized Laplacian $[Q, Q^\dagger]$ can be used as a propagator. 

\begin{definition}
Let $(\cE, S)$ be a classical field theory over $M$.  A {\it perturbative quantization} is a family of solutions to the QME, $\{I[L]\}_{L>0}$, linked by the HRG, such that 
\[
\lim_{L \to 0} I[L] \equiv I_0 \quad \quad (\text{modulo } \hbar).
\]
\end{definition}

\begin{remark}
Flow via the HRG induces a chain homotopy between quantum observables  as we vary within the family of BV algebras $\{(\sO(E), Q, \Delta_L)\}_{L>0}$.  Thus, we will supress the dependence on $L$ and abusively refer to these chain homotopic complexes as the {\it global quantum observables} of our field theory.
\end{remark}

\subsection{Homotopy Renormalization Group Flow}\label{app:hrg}

The homotopy renormalization group flow equation can be described in terms of Feynman graphs. Note that our description is for an arbitrary functional on a space of fields $\cE$. Further, we will work relative to an arbitrary dg algebra $\cA$ equipped with a nilpotent ideal $\cI$.

\begin{definition}
A graph $\cG$ consists of the following data:
\begin{enumerate}
 \item A finite set of vertices $V(\cG)$;
 \item A finite set of half-edges $H(\cG)$;
 \item An involution $\sigma: H(\cG)\rightarrow H(\cG)$. The set of fixed points of this map is denoted by $T(\cG)$ and is
called the set of tails of $\cG$. The set of two-element orbits is denoted by $E(\cG)$ and is called the set of internal edges of
$\cG$;
 \item A map $\pi:H(\cG)\rightarrow V(\cG)$ sending a half-edge to the vertex to which it is attached;
 \item A map $g:V(\cG)\rightarrow \mathbb{Z}_{\geqslant 0}$ assigning a genus to each vertex.
\end{enumerate}
\end{definition}
It is clear how to construct a topological space $|\cG|$ from the above abstract data. A graph $\cG$ is called $connected$ if
$|\cG|$ is connected. The genus of the graph $\cG$ is defined to be 
\[
g(\cG):=b_1(|\cG|)+\sum_{v\in V(\cG)}g(v),
\] 
where
$b_1(|\cG|)$ denotes the first Betti number of $|\cG|$. Let 
\[
\sO^+(\cE)\subset \sO(\cE)[[\hbar]]
\] 
be the subspace consisting of those
functionals  which are at least cubic modulo $\hbar$ and the nilpotent ideal $\mathcal{I}$ in the base ring
$\mathcal{A}$.  Let $F\in \sO^+(\cE)$ be a functional,  which can be expanded as
\[
F=\sum_{g,k\geq 0}\hbar^g F_{g}^{(k)}, \quad F_{g}^{(k)}\in \sO^{(k)}(\cE).
\]
We view each $F_{g}^{(k)}$ as an
$S_k$-invariant linear map
\[
F_{g}^{(k)}: \mathcal{E}^{\otimes k}\rightarrow\mathcal{A}.
\]
With the propagator $P_{\epsilon \to L}$, we will describe the {\it (Feynman) graph weights}
\[
W_\cG(P_{\epsilon \to L},F)\in \sO^+(\cE)
\] 
for any connected graph $\cG$. We label each vertex $v$ in $\cG$ of genus $g(v)$ and valency $k$ by
$F^{(k)}_{g(v)}$. This defines an assignment
\[
F(v):\mathcal{E}^{\otimes H(v)}\rightarrow \cA,
\]
where $H(v)$ is the set of half-edges of $\cG$ which are incident to $v$.
Next, we label each internal edge $e$ by the propagator 
\[
P_e=P_{\epsilon \to L}\in\mathcal{E}^{\otimes H(e)},
\]
where $H(e)\subset H(\cG)$ is the two-element set consisting of the half-edges forming $e$. We can then contract
\[
\otimes_{v\in V(\cG)}F(v): \mathcal{E}^{H(\cG)}\rightarrow \cA
\]
with 
\[
\otimes_{e\in E(\cG)} P_e\in\mathcal{E}^{H(\cG)\setminus T(\cG)}
\] 
to yield a linear map
\[
W_\cG(P_{\epsilon \to L},F) : \mathcal{E}^{\otimes T(\cG)}\rightarrow \cA.
\]

\begin{definition}
We define the (homotopy) RG flow operator with respect to the propagator $P_{\epsilon \to L}$ 
\[
   W(P_{\epsilon \to L}, -): \sO^+(\cE)\to \sO^+(\cE), 
\]
by
\begin{equation}\label{RG-flow}
W(P_{\epsilon \to L}, F):=\sum_{\cG}\frac{\hbar^{g(\cG)}}{\lvert \text{Aut}(\cG)\rvert}W_\cG(P_{\epsilon \to L}, F)
\end{equation}
where the sum is over all connected graphs.
\end{definition}

Equivalently, it is useful to describe the (homotopy) RG flow operator formally via the simple equation 
\[
e^{W(P_{\epsilon \to L}, F)/\hbar}=e^{\hbar \partial_{P_{\epsilon \to L}}} e^{F/\hbar}.
\]

\begin{definition} A family of functionals $F[L] \in \sO^+(\cE)$ parametrized by $L>0$ is said to satisfy the homotopy renormalization group flow equation (hRGE) if for each $0 < \epsilon < L$
\[
    F[L]=W(P_{\epsilon \to L}, F[\epsilon]).
\]
\end{definition}

\subsection{BV Theory and Volume Forms: Integration via Homology}\label{sect:pvol}

We now quickly explain the underlying relationship between BV theory, path integrals, homological methods, and perturbation theory. The story below suggests a relationship between quantizations of a cotangent theory and projective volume forms; the precise relationship is given by Proposition \ref{prop:projvol} above. More detailed (and eloquent) presentations are given in \cite{ABF}, \cite{Fiorenza}, and \cite{StasheffBV}. Also, historically much of the mathematical development goes back to  Koszul \cite{Koszul}.

For simplicity, let $X$ be a connected, orientable, smooth manifold of dimension $n$.\footnote{Using densities, the following arguments can be adopted to unoriented manifolds.} Every top form $\mu \in \Omega^n(X)$ then defines a linear functional
\[
\begin{array}{cccc}
\int_\mu:& \cinf_c(X)& \to &\RR \\
 & f & \mapsto & \int_X f \mu
 \end{array},
\]
which is a natural object from several perspectives. First, from this linear functional --- the distribution associated to $\mu$ --- we can completely reconstruct the top form $\mu$. Second, if $\mu$ is a probability measure, then $\int_\mu$ is precisely the expected value map. Our goal is now to rephrase $\int_\mu$ in a way that does not explicitly depend on ordinary integration and thus to obtain a version of volume form that can be extended to $\L8$ spaces.

We can understand $\int_\mu$ in a purely homological way, as follows. We know that integration over $X$ vanishes on total derivatives $d\omega \in \Omega^n_c(X)$, by Stokes' Theorem, so we have a commutative diagram
\[
\xymatrix{
\Omega^n_c(X) \ar[rr]^{\int_X} \ar[rd]_{[-]} & & \RR \\
 & H^n_c(X) \ar[ru]_{\cong} &
}
\]
where $[\omega]$ denotes the cohomology class of the top form $\omega$. (The cohomology group $H^n_c(X)$ is 1-dimensional by Poincar\'e duality.) In consequence, we can identify $\int_\mu$ with the composition
\[
\xymatrix{
 \Omega^n_c(X) \ar[r]^{[-]} &  H^n_c(X)\\
\cinf_c(X) \ar[u]^{\iota_\mu} \ar[ur]_{\int_\mu}&
}
\]
where $\iota_\mu$ denotes ``multiplication by $\mu$" (or ``contraction with $\mu$"). We thus have a purely homological version of integration against $\mu$.

It is natural to extend the map ``contract with $\mu$" to the whole de Rham complex, and not just the top forms:
\[
\xymatrix{ 
\dotsb \ar[r] & \Omega_c^{n-2}(X) \ar[r]^d & \Omega_c^{n-1}(X) \ar[r]^d & \Omega^n_c (X)\ar[r]^{\int_X} & \RR \\
\dotsb \ar[r] & PV^2_c T(X)  \ar[r]^{div_\mu} \ar[u]_{\iota_{\mu}} & PV_c^1(X)  \ar[r]^{div_\mu} \ar[u]_{\iota_{\mu}} & C_c^\infty(X) \ar[u]_{\iota_{\mu}}  \ar[ur]_{\int_\mu}
},
\]
where $PV^k_c(X) := \Gamma_c(X, \Lambda^k T_X)$ denotes the compactly-supported {\em polyvector fields} and $div_\mu$ denotes ``divergence with respect to $\mu$." We require now that $\mu$ is nowhere-vanishing, so that the divergence is well-defined. This map of cochain complexes $\iota_\mu$ is then an isomorphism.

The significance of the bottom row is that it fully encodes integration against $\mu$ but the relevant data of $\mu$ is contained in the differential $div_\mu$. We would like to characterize such differentials on the polyvector fields $PV(X)$, in order to describe a version of integration that applies to spaces more general than manifolds (at least spaces that possess a good notion of polyvector fields).\footnote{This kind of integration notion would work even for infinite-dimensional spaces, for which there are no top forms but there is a ring of functions.}

Note that a choice of top form $\mu$ induces a $\cinf_X$-linear map from $\cinf_X$ to $\Omega^n_X$, and so we can pullback the natural {\em right} $D_X$-module structure on $\Omega^n_X$.\footnote{Differential operators $D_X$ naturally act on $\cinf_X$ from the left, and they naturally act on top forms from the right. This right action is a consequence of the integration pairing between top forms and functions. In other words, $D_X$ acts on distributions from the right, and hence on top forms as well.} Let $\cinf_{X,\mu}$ denote $\cinf_X$ equipped with this right $D_X$-module structure.\footnote{Let $p: X \to \pt$ denote the map to a point. Then the derived pushforward of $\cinf_{X,\mu}$ along $p$ is given by $(PV(X), div_\mu)$. This construction is sometimes referred to as the {\it Spencer resolution}.}
Although a volume form makes $\cinf_X$ into a right $D_X$-module, the converse need not hold. Observe that for any nonzero constant $c$, the operators $div_\mu$ and $div_{c \mu}$ are the same. Hence, a right $D_X$-module structure on $\cinf_X$ locally gives a volume form only {\em up to scale}. 

The complex $(PV(X), div_\mu)$ is a fundamental example of a BV algebra (see Appendix \ref{app:BV}), the associated bracket $\{-,-\}$ is the Schouten bracket.  Further, note that we have an equivalence
\[
 \sO (T^\ast [-1]X) \cong PV(X).
\]
Hence, we witness the intimate relationship between BV algebras, shifted cotangent bundles, integration via homological algebra, and (projective) volume forms.

\section{The $\hat{A}$ Genus}\label{app:genera}

Recall that (following Hirzebruch \cite{H}) the Todd class can be defined in terms of Chern classes by the power series $Q (x)$ and the $\hat{A}$ class is given in Pontryagin classes via $P (x)$ where
\[
Q(x) = \frac{x}{1- e^{-x}} \text{   \; \; \;    and     \; \; \;      } P(x) = \frac{x/2}{\sinh x/2} .
\]
We define a new power series by $\log (Q(x)) - x/2$ and denote the corresponding characteristic class by $\log (e^{-c_1/2} \text{Td})$. We have an equivalence of power series (see \cite{Cartier} and \cite{BHJ})
\begin{equation}\label{pwrsrs}
\log \left (\frac{x}{1-e^{-x}} \right ) - \frac{x}{2} = \sum_{k \ge 1} 2 \zeta (2k) \frac{x^{2k}}{2k (2 \pi i)^{2k}} ,
\end{equation}
where $\zeta$ is the Riemann zeta function.

We now use standard arguments about characteristic classes. For a sum of complex line bundles $E = L_1 \oplus \cdots \oplus L_n$, the Todd class is 
\[
Td(E) = Q(c_1(L_1)) \cdots Q(c_1(L_n)).
\]
Thus, equation \ref{pwrsrs} tells us
\[
\log (e^{-c_1(E)/2} Td(E) ) = \sum_{k \ge 1} \frac{2 \zeta (2k)}{2k (2 \pi i)^{2k}} (c_1(L_1)^{2k} + \cdots + c_1(L_n)^{2k}).
\] 
As $ch_{2k}(E) = (c_1(L_1)^{2k} + \cdots + c_1(L_n)^{2k})/(2k!)$, we obtain a general formula for an arbitrary bundle $E$,
\[
\log (e^{-c_1(E)/2} Td(E) ) = \sum_{k \ge 1} \frac{2 \zeta (2k)}{2k (2 \pi i)^{2k}} (2k)! ch_{2k}(E).
\]

Putting together the above discussion we make the following definition for an $L_\infty$ space $B \fg$.

\begin{definition}
Let $V$ be a vector bundle over $(X, \fg)$ (e.g. the tangent bundle as given by the module $\fg[1]$) then the we define
\[
\log(\hat{A} (V)) \overset{\text{def}}{=} \sum_{k \ge 1} \frac{2 \zeta (2k)}{2k (2 \pi i)^{2k}} (2k)! ch_{2k}(V) \in \Omega^{-\ast}_{B \fg} .
\]
\end{definition}

Similarly, we will need an equivariant class, so we define for any smooth manifold $X$ the class $\log (\hat{A}_u (X))$ to be 
\[
\log (\hat{A}_u (X)) \overset{def}{=} \sum_{k \ge 1} \frac{2 (2k-1)!}{(2 \pi i)^{2k}} u^{2k} \zeta(2k) ch_{2k} (X) \in (\Omega^{-\ast}_X [[u]], ud ).
\]
This is the usual logarithm of the $\hat{A}$ class  weighted by powers of $u$.

\subsection{The Many Faces of the $\hat{A}$ Genus}

The $\hat{A}$ genus/class makes many appearances in math and physics, as we now briefly sketch.

\subsubsection{Index of the Dirac Operator}

In the late 1950s Borel and Hirzebruch \cite{BH} proved that $\hat{A} (X)$ was an integer provided that $M$ was a spin manifold.  The question of why the spin condition implied integrality was quickly sorted by Hirzebruch and Atiyah \cite{AH}.
In extended work with Singer (and Bott and Hirzebruch) \cite{AS}, Atiyah fleshed out this picture through the formulation (and proof) of the {\it index theorem}. In particular, for a spin manifold $X$, the integer $\hat{A} (X)$ is the Fredholm index of an elliptic operator: the Dirac operator.

\subsubsection{An Obstruction to Positive Scalar Curvature}

Using a Bochner/Weitzenb\"{o}ck formula, Lichnerowicz \cite{Lich} showed that $\hat{A} (X)$ is an obstruction to a spin manifold $X$ admitting a metric of positive scalar curvature. Hitchin \cite{Hitchin} refined this result and introduced a K-theoretic invariant, the $\alpha$ invariant. Significant progress on the converse statement was made by Gromov and Lawson \cite{GL}, before the problem was solved--in the simply connected case--by Stolz \cite{Stolz}.

\subsubsection{Twisted Todd Class}
In the holomorphic setting we use the characteristic class $e^{-c_1(E)/2} Td(E)$. Note that for real bundles, this agrees with $\hat{A} (E)$. Following \cite{BHJ} or \cite{Gilkey} there is an index theoretic explanation. Indeed, suppose that a manifold $X$ is both complex and spin.  Then there are two canonical $\text{spin}^c$ structures and corresponding Dirac operators.  Let $\dirac^\CC$ be the Dirac operator corresponding to the $\text{spin}^c$ structure determined by the complex structure and let $\dirac$ denote the Dirac operator determined by the spin structure.  One can show that $\dirac^\CC = \dirac_{K^{1/2}}$ where $K^{1/2}$ denotes the square root of the canonical bundle.  Then via the Atiyah-Singer index theorem we have
\[
\langle \mathrm{Td} (T_X) , [X] \rangle = \text{Index} \; \dirac^\CC = \text{Index} \; \dirac_{K^{1/2}} = \langle e^{c_1 (TX) /2} \hat{A} (T_X) , [X] \rangle .
\]

\subsubsection{As a Partition Function}

The relationship between the $\hat{A}$ class/genus and (supersymmetric) physics has an extended history, for highlights see \cite{AG}, \cite{FW}, or \cite{GetzlerIndex}.  Indeed, in our previous work, \cite{GGCS}, we demonstrated $\hat{A}(M)$ as the partition function of a one dimensional perturbative BV theory.

%
%
%
%
%

\bibliographystyle{amsalpha}
\bibliography{volume_form}

\providecommand{\bysame}{\leavevmode\hbox to3em{\hrulefill}\thinspace}
\providecommand{\MR}{\relax\ifhmode\unskip\space\fi MR }
\providecommand{\MRhref}[2]{%
  \href{http://www.ams.org/mathscinet-getitem?mr=#1}{#2}
}
\providecommand{\href}[2]{#2}
\begin{thebibliography}{WMLI92}

\bibitem[ABF10]{ABF}
C.~Albert, B.~Bleile, and J.~Fr\"ohlich, \emph{Batalin-{V}ilkovisky integrals
  in finite dimensions}, J. Math. Phys. \textbf{51} (2010), no.~1, 015213, 31.
  \MR{2605846}

\bibitem[AG83]{AG}
Luis Alvarez-Gaum\'e, \emph{Supersymmetry and the {A}tiyah-{S}inger index
  theorem}, Comm. Math. Phys. \textbf{90} (1983), no.~2, 161--173. \MR{714431}

\bibitem[AH59]{AH}
M.~F. Atiyah and F.~Hirzebruch, \emph{Riemann-{R}och theorems for
  differentiable manifolds}, Bull. Amer. Math. Soc. \textbf{65} (1959),
  276--281. \MR{0110106}

\bibitem[AS63]{AS}
M.~F. Atiyah and I.~M. Singer, \emph{The index of elliptic operators on compact
  manifolds}, Bull. Amer. Math. Soc. \textbf{69} (1963), 422--433. \MR{0157392}

\bibitem[ASZK97]{AKSZ}
M.~Alexandrov, A.~Schwarz, O.~Zaboronsky, and M.~Kontsevich, \emph{The geometry
  of the master equation and topological quantum field theory}, Internat. J.
  Modern Phys. A \textbf{12} (1997), no.~7, 1405--1429. \MR{1432574
  (98a:81235)}

\bibitem[BH58]{BH}
A.~Borel and F.~Hirzebruch, \emph{Characteristic classes and homogeneous
  spaces. {I}}, Amer. J. Math. \textbf{80} (1958), 458--538. \MR{0102800}

\bibitem[BZN12]{BZN}
David Ben-Zvi and David Nadler, \emph{Loop spaces and connections}, J. Topol.
  \textbf{5} (2012), no.~2, 377--430. \MR{2928082}

\bibitem[CG17]{CosGw}
Kevin Costello and Owen Gwilliam, \emph{Factorization algebras in quantum field
  theory. {V}ol. 1}, New Mathematical Monographs, vol.~31, Cambridge University
  Press, Cambridge, 2017. \MR{3586504}

\bibitem[CGon]{CosGw2}
\bysame, \emph{Factorization algebras in quantum field theory}, vol.~2,
  Cambridge University Press, in preparation.

\bibitem[CMR14]{CMR}
Alberto~S. Cattaneo, Pavel Mnev, and Nicolai Reshetikhin, \emph{Classical {BV}
  theories on manifolds with boundary}, Comm. Math. Phys. \textbf{332} (2014),
  no.~2, 535--603. \MR{3257656}

\bibitem[Cos]{CosWG}
Kevin Costello, \emph{A geometric construction of the {W}itten genus, {II}},
  available at \href{http://front.math.ucdavis.edu/1112.0816}{arXiv:1112.0816}.

\bibitem[Cos11]{Cos1}
\bysame, \emph{Renormalization and effective field theory}, Mathematical
  Surveys and Monographs, vol. 170, American Mathematical Society, Providence,
  RI, 2011. \MR{2778558}

\bibitem[Cos13]{CosSUSY}
\bysame, \emph{Notes on supersymmetric and holomorphic field theories in
  dimensions 2 and 4}, Pure Appl. Math. Q. \textbf{9} (2013), no.~1, 73--165.
  \MR{3126501}

\bibitem[Fed94]{Fed}
Boris~V. Fedosov, \emph{A simple geometrical construction of deformation
  quantization}, J. Differential Geom. \textbf{40} (1994), no.~2, 213--238.
  \MR{1293654}

\bibitem[Fio]{Fiorenza}
Domenico Fiorenza, \emph{An introduction to the batalin-vilkovisky formalism},
  available at \href{https://arxiv.org/abs/math/0402057}{arXiv:math/0402057}.

\bibitem[FW84]{FW}
D.~Friedan and P.~Windey, \emph{Supersymmetric derivation of the
  {A}tiyah-{S}inger index and the chiral anomaly}, Nuclear Phys. B \textbf{235}
  (1984), no.~3, 395--416. \MR{888706}

\bibitem[Get86]{GetzlerIndex}
Ezra Getzler, \emph{A short proof of the local {A}tiyah-{S}inger index
  theorem}, Topology \textbf{25} (1986), no.~1, 111--117. \MR{836727}

\bibitem[GG14]{GGCS}
Ryan Grady and Owen Gwilliam, \emph{One-dimensional {C}hern--{S}imons theory
  and the \^{A} genus}, Algebr. Geom. Topol. \textbf{14} (2014), no.~4,
  419--497. \MR{3331615}

\bibitem[GG15]{GGLoop}
\bysame, \emph{{$L\sb \infty$} spaces and derived loop spaces}, New York J.
  Math. \textbf{21} (2015), 231--272. \MR{3358542}

\bibitem[GG18]{GGAlgd}
\bysame, \emph{Lie algebroids as ${L}_\infty$ spaces}, Journal of the Institute
  of Mathematics of Jussieu (2018), Accepted, available at
  \href{http://arxiv.org/abs/1604.00711}{arXiv:1604.00711}.

\bibitem[GGW]{GGW}
Ryan Grady, Owen Gwilliam, and Brian Williams, \emph{Topological mechanics on a
  twisted cotangent bundle}, to appear, 2018.

\bibitem[Gil95]{Gilkey}
Peter~B. Gilkey, \emph{Invariance theory, the heat equation, and the
  {A}tiyah-{S}inger index theorem}, second ed., Studies in Advanced
  Mathematics, CRC Press, Boca Raton, FL, 1995. \MR{1396308}

\bibitem[GL80]{GL}
Mikhael Gromov and H.~Blaine Lawson, Jr., \emph{The classification of simply
  connected manifolds of positive scalar curvature}, Ann. of Math. (2)
  \textbf{111} (1980), no.~3, 423--434. \MR{577131}

\bibitem[GLL17]{GLL}
Ryan~E. Grady, Qin Li, and Si~Li, \emph{Batalin--{V}ilkovisky quantization and
  the algebraic index}, Adv. Math. \textbf{317} (2017), 575--639. \MR{3682678}

\bibitem[Gri12]{Grivaux}
Julien Grivaux, \emph{On a conjecture of {K}ashiwara relating {C}hern and
  {E}uler classes of {$\mathscr{O}$}-modules}, J. Differential Geom.
  \textbf{90} (2012), no.~2, 267--275. \MR{2899876}

\bibitem[HBJ92]{BHJ}
Friedrich Hirzebruch, Thomas Berger, and Rainer Jung, \emph{Manifolds and
  modular forms}, Aspects of Mathematics, E20, Friedr. Vieweg \& Sohn,
  Braunschweig, 1992, With appendices by Nils-Peter Skoruppa and by Paul Baum.
  \MR{1189136 (94d:57001)}

\bibitem[Hir95]{H}
Friedrich Hirzebruch, \emph{Topological methods in algebraic geometry},
  Classics in Mathematics, Springer-Verlag, Berlin, 1995, Translated from the
  German and Appendix One by R. L. E. Schwarzenberger, With a preface to the
  third English edition by the author and Schwarzenberger, Appendix Two by A.
  Borel, Reprint of the 1978 edition. \MR{1335917 (96c:57002)}

\bibitem[Hit74]{Hitchin}
Nigel Hitchin, \emph{Harmonic spinors}, Advances in Math. \textbf{14} (1974),
  1--55. \MR{0358873}

\bibitem[Kos85]{Koszul}
Jean-Louis Koszul, \emph{Crochet de {S}chouten-{N}ijenhuis et cohomologie},
  Ast\'erisque (1985), no.~Num\'ero Hors S\'erie, 257--271, The mathematical
  heritage of \'Elie Cartan (Lyon, 1984). \MR{837203}

\bibitem[KS12]{KSDefQuant}
Masaki Kashiwara and Pierre Schapira, \emph{Deformation quantization modules},
  Ast\'erisque (2012), no.~345, xii+147. \MR{3012169}

\bibitem[Li]{SiVertex}
Si~Li, \emph{Vertex algebras and the quantum master equation}, available at
  \href{https://arxiv.org/abs/1612.01292}{arXiv:1612.01292}.

\bibitem[Lic63]{Lich}
Andr\'e Lichnerowicz, \emph{Spineurs harmoniques}, C. R. Acad. Sci. Paris
  \textbf{257} (1963), 7--9. \MR{0156292}

\bibitem[Mar09]{Markarian}
Nikita Markarian, \emph{The {A}tiyah class, {H}ochschild cohomology and the
  {R}iemann-{R}och theorem}, J. Lond. Math. Soc. (2) \textbf{79} (2009), no.~1,
  129--143. \MR{2472137 (2010d:14020)}

\bibitem[NT95]{NT}
Ryszard Nest and Boris Tsygan, \emph{Algebraic index theorem}, Comm. Math.
  Phys. \textbf{172} (1995), no.~2, 223--262. \MR{1350407}

\bibitem[Ram08]{Ramadoss}
Ajay~C. Ramadoss, \emph{The relative {R}iemann-{R}och theorem from {H}ochschild
  homology}, New York J. Math. \textbf{14} (2008), 643--717. \MR{2465798}

\bibitem[Sta98]{StasheffBV}
Jim Stasheff, \emph{The (secret?) homological algebra of the
  {B}atalin-{V}ilkovisky approach}, Secondary calculus and cohomological
  physics ({M}oscow, 1997), Contemp. Math., vol. 219, Amer. Math. Soc.,
  Providence, RI, 1998, pp.~195--210. \MR{1640453}

\bibitem[Sto92]{Stolz}
Stephan Stolz, \emph{Simply connected manifolds of positive scalar curvature},
  Ann. of Math. (2) \textbf{136} (1992), no.~3, 511--540. \MR{1189863}

\bibitem[TV11]{TV11}
Bertrand To\"en and Gabriele Vezzosi, \emph{Alg\`ebres simpliciales
  {$S^1$}-\'equivariantes, th\'eorie de de {R}ham et th\'eor\`emes {HKR}
  multiplicatifs}, Compos. Math. \textbf{147} (2011), no.~6, 1979--2000.
  \MR{2862069}

\bibitem[Wit99]{WittenIAS}
Edward Witten, \emph{Index of {D}irac operators}, Quantum fields and strings: a
  course for mathematicians, {V}ol. 1, 2 ({P}rinceton, {NJ}, 1996/1997), Amer.
  Math. Soc., Providence, RI, 1999, pp.~475--511. \MR{1701605}

\bibitem[WMLI92]{Cartier}
M.~Waldschmidt, P.~Moussa, J.~M. Luck, and C.~Itzykson (eds.), \emph{From
  number theory to physics}, Springer-Verlag, Berlin, 1992, Papers from the
  Meeting on Number Theory and Physics held in Les Houches, March 7--16, 1989.
  \MR{1221099 (93m:11001)}

\end{thebibliography}

\end{document}